\theoremstyle{plain}
\newtheorem{theorem}{Theorem}
\newtheorem{lemma}[theorem]{Lemma}
\newtheorem{proposition}[theorem]{Proposition}
\theoremstyle{definition}
\newtheorem{definition}{Definition}[section]
\theoremstyle{remark}
\newtheorem*{remark}{Remark}
\numberwithin{equation}{section}
\newcommand\G{\Gamma}
\newcommand{\A}{\mathbf{A}}
\newcommand{\Z}{\mathbf{Z}}
\newcommand{\Q}{\mathbf{Q}}
\newcommand{\C}{\mathbf{C}}
\newcommand{\R}{\mathbf{R}}
\newcommand{\HH}{\mathbf{H}}
\newcommand{\PP}{\mathbf{P}}
\newcommand\g{\gamma}
\newcommand\GG{{\G\backslash G}}
\renewcommand\phi{\varphi}
\newcommand\SL{\mathrm{SL}}
\newcommand\PSL{\mathrm{PSL}}
\newcommand\GL{\mathrm{GL}}
\newcommand\PGL{\mathrm{PGL}}
\title{Equidistribution in $S$-arithmetic and adelic spaces}
\author{Antonin Guilloux}
\address{Sorbonne Universités, UPMC Univ Paris 06, Institut de Mathématiques de Jussieu-Paris Rive Gauche,
UMR 7586, CNRS, Univ Paris Diderot, Sorbonne Paris Cité, F-75005, Paris, France}
\email{antonin.guilloux@imj-prg.fr}
\urladdr{http://webusers.imj-prg.fr/~antonin.guilloux/index.html}
\begin{document}

\begin{abstract}
We give an introduction to adelic mixing and its applications for mathematicians knowing about the mixing of the geodesic flow on hyperbolic surfaces. We focus on the example of the Hecke trees in the modular surface.

Cet article présente une introduction au mélange adélique et ses applications. La présentation faite est pensée pour les mathématiciens connaissant le mélange du flot géodésique sur les surfaces hyperboliques. L'accent est principalement mis sur l'exemple des arbres de Hecke dans la surface modulaire.
\end{abstract}

 \maketitle

 This paper is based on a mini-course given at the conference "Cross-views on hyperbolic geometry and arithmetic" held in Toulouse in November 2012. The purpose was to give an introduction to adelic mixing and its applications for mathematicians who knew about the mixing of the geodesic flow on hyperbolic surfaces; but who may also be intimidated by the $p$-adic and adelic part of the topic. I tried to overcome this  by sticking to the simplest and fundamental example of the Hecke trees in the modular surfaces and by spending some time on taming the concept of adeles. I hope that the description of the solenoid associated to the adeles may help those used to dynamics to get a first insight into the nature of adeles and their dynamical property. I chose to avoid almost entirely the language of algebraic groups, which may be another intimidating topic to go into. Therefore I do not even state the theorem of adelic mixing in its generality. I hope that this introduction will convince geometers that adelic mixing is indeed a natural and interesting tool; and that the description of an example may guide their delving into a more conceptual and comprehensive treatment.
 
 \section*{Introduction}
 
We will introduce some tools to understand the repartition of certain orbits of an action onto a  homogeneous space. The reader should keep in mind throughout the paper --- especially when (s)he does not feel comfortable with the algebraic setting --- that everything begins with the action of a Fuchsian group on the hyperbolic plane. Indeed, we will always deal with:
\begin{itemize}
 \item A locally compact second countable group $G$. The canonical example to keep in mind is $\SL(2,\R)$. Note that $G$ possesses a Haar measure.
 \item A lattice $\Gamma$ of $G$, that is a discrete subgroup such that the Haar measure of $\Gamma\backslash G$ is finite. We will say that $\Gamma$ has finite covolume and denote this volume by
 $\mathrm{covol}(\Gamma)$. The canonical examples are the Fuchsian groups. If asked to pick one of them, say $\SL(2,\Z)$.
 \item A homogeneous space under $G$: the canonical example is the hyperbolic plane $\HH\simeq \SL(2,\R)/\mathrm{SO}(2)$. More generally, one choose a closed subgroup $H$ of $G$ and consider the space $G/H$.
\end{itemize} 
 With these objects, we want to understand the dynamical properties of the action of $\Gamma$ on $G/H$. For example, how many points of an orbit are not to far away from the initial point ? More precisely, in our canonical example, we pick $p\in\HH$ and wonder how many points of the orbit  $\Gamma.p$ lie in a ball $B(p,R)$ of radius $R$ in $\HH$. We will denote by $N_R$ this number:
 $$N_R=\mathrm{Card} \left(\Gamma.p\cup B(p,R)\right).$$
 
 A crucial tool to describe this action is the "duality phenomenon". Indeed, it turns out that the properties of the both the actions 
 $$\Gamma\textrm{ acting on } G/H\textrm{ and }H\textrm{ acting on }\GG$$
 are deeply linked. An easy yet instructive exercise is to note that the orbit of $gH$ under $\Gamma$ is dense in $G/H$ if and only if the one of $\Gamma g$ under $H$ is dense in $\GG$. This observation is the starting point of a huge amount of work. Let us sketch two examples, so that the reader may understand more clearly what lies behind this "duality phenomenon" (and incidentally how the mixing of the geodesic flow appears).
 
 \subsection*{Example 1: Counting and Mixing}
This is the "canonical" example already described:
\begin{itemize}
 \item $G=\SL(2,\R)$.
 \item $\Gamma$ is any lattice of $G$.
 \item The homogeneous space under $G$ is $\HH$. The subgroup $H$ is then $\mathrm{SO(2)}$.
\end{itemize} 
A famous and seminal work of Margulis in his thesis \cite{margulis} was to show a new way of determining the number $N_R$. Indeed he translated, via the "duality phenomenon" the problem in terms of "equidistribution of spheres", i.e. long orbits of $H=\mathrm{SO}(2)$, in the space $\GG$. Recall that the latter is the unitary tangent bundle to the hyperbolic surface $\Gamma\backslash \HH$. These "spheres" are the orbits:
$$\Gamma\backslash \Gamma g H a_t,$$
where the beginning point $p$ of the orbit is the point $gH$ of $\HH=G/H$ and 
$$a_t=\begin{pmatrix} e^{\frac{t}{2}}&0\\0&e^{-\frac{t}{2}}\end{pmatrix}$$
is the geodesic flow. 
These orbits carry a natural probability measure: the image of the Haar measure of $H= \mathrm{SO}(2)$ on $gHa_t$.

This is the point where mixing comes into play: using this property of the geodesic flow (together with an important lemma called "wavefront lemma"), one may prove the equidistribution of spheres. One exactly proves that, for any starting point $p=gH$, the probability measure on  the spheres $\Gamma\backslash \Gamma g H a_t$ converge to the Haar measure on $\GG$. This, in turn, allows to get the estimation:
$$N_R\simeq \frac{\mathrm{Vol}(B(p,R))}{\mathrm{covol}(\Gamma)}.$$
As the mixing of the geodesic flow is exponential one may get an error term. A great quality of this method is that it is very robust: for arithmetic lattices $\Gamma$ (e.g. $\SL(2,\Z)$), number theoretic methods may lead to a very precise estimation of $N_R$. But here, the method works the same for any lattice. Moreover Margulis was able to work in the case of non-constant curvature. And one can adapt it to other $G$ and $H$. The famous paper of Eskin and McMullen \cite{eskin-mcmullen} appears to be a very good entry point in this world.

\subsection*{Example 2: Equidistribution and unipotent flows}
 Here we take:
\begin{itemize}
 \item $G=\SL(2,\R)$.
 \item $\Gamma=\SL(2,\Z)$.
 \item The homogeneous space under $G$ is $\R^2\setminus\{0\}$. The subgroup $H$ is the group of unipotent matrices of the form $\begin{pmatrix} 1&u\\0&1\end{pmatrix}.$
\end{itemize} 
So we are looking at the orbits of $\SL(2,\Z)$ on the euclidean plane, for the linear action. For any $\gamma =\begin{pmatrix}a&b\\c&d\end{pmatrix}\in \Gamma$, we denote by $\|\gamma\|=\sqrt{a^2+b^2+c^2+d^2}$ its euclidean norm and by $\Gamma_T=\{\gamma\in\Gamma \, |\, \|\gamma\|\leq T\} $ the ball in $\Gamma$ of radius $T$. For any point $p$ of $\R^2$, let $\mathrm{Dir}_p$ be the Dirac mass in $p$. Let $\frac{\mathrm{Leb}}{r}$ be the measure on $\R^2\setminus \{0\}$ which in polar coordinates is given by $dr d\theta$. 
Ledrappier \cite{ledrappier} (see also Nogueira \cite{nogueira} for a different approach) showed the following equidistribution result, for any $v=(x,y)\in\R^2$ with $\frac{x}{y}\not\in \Q$:
$$\frac{1}{2T}\sum_{\gamma\in\Gamma_T} \mathrm{Dir}_{\gamma v}\rightharpoonup_{T\to\infty} \frac{\mathrm{Leb}}{r}.$$
For readers not used to these equidistribution statements, this mean that for any continuous function $\phi$ on $\R^2\setminus\{0\}$ with compact support, we have:
$$\frac{1}{2T}\sum_{\gamma\in\Gamma_T} \phi(\gamma v)\xrightarrow{T\to\infty}  \int\int_{\R^2\setminus\{0\}}\frac{\phi(x,y)}{\sqrt{x^2+y^2}}dxdy.$$

In this work too, the key point is the study of the $H$-action on $\GG$. This time, one may use an equidistribution result of Dani-Smillie \cite{dani-smillie}: the dynamic of the unipotent groups in $\GG$ is very rigid and has few invariant ergodic measures. This last result was deeply generalized by Ratner \cite{ratner}, Margulis-Tomanov \cite{margulis-tomanov} (in an $S$-arithmetic setting similar to the one I will introduce afterwards) and eventually by Benoist-Quint \cite{benoist-quint}. Those generalizations in turn lead to generalization of Ledrappier's result by varying $\Gamma$, $G$ and $H$. We let the reader look at Gorodnik-Weiss \cite{gorodnik-weiss} for the real case in a general setting, Maucourant \cite{maucourant}, Maucourant-Weiss \cite{maucourant-weiss} and Guilloux \cite{guilloux1} for some more precisions on Ledrappier's result, and Guilloux \cite{guilloux2} for an $S$-arithmetic treatment. 

\bigskip

As we saw in both example, those techniques are very robust and may be adapted to various groups and even $p$-adic or adelic groups. Those are very interesting for arithmetic or number-theoretic study. I would like to explain here how it is sometimes possible to translate or reinterpret problems of arithmetic flavor in such a way that they look very similar to dynamical or equidistribution problems. I shall concentrate on a problem related to the first example and use mixing.

As a very simple to state example, let us mention that a direct adaptation of the strategy of Margulis for example $1$ together with a result on adelic mixing gives an answer to the following question \cite{guilloux3}, which seems highly arithmetic ($d\geq 3$ and $n$ is an integer):\\
Decide the existence and estimate the number of elements of $\mathrm{SO(3,\Q)}$ of "denominator" $n$, i.e. which may be written as $\frac{1}{n}$ times a matrix with integer entries:
$$\mathrm{SO(3,\Q)}\cap \frac{\mathcal M(3,\Z)}{n}\textrm{ as }n\to\infty.$$

\bigskip

Here is the structure of the paper: in the following section, I will introduce some beautiful objects of arithmetic origin which naturally interplay with hyperbolic geometry. They are well-known at least in some areas of mathematics: Hecke trees. At the end of this section, I will be able to state the theorem we will be interested to: equidistribution of Hecke spheres.

In the second section, I will introduce the algebraic tools needed to reinterpret the problem of equidistribution of Hecke spheres as a dynamical problem related to some "geodesic" flow. I will introduce $p$-adic fields. I include a description of a nice dynamical system, the "solenoid", which helps to build an intuition of these fields. Then I will move on to the adeles, which are nearly a product of all the $p$-adic fields. For an algebraic group defined with equations with rational coefficients, we consider the group of its point over the adeles. We explain briefly the link this last group and the group of real points, thanks to Borel- Harish-Chandra theorem. Then we focus on the group $\mathrm{PGL}(2)$ and describe briefly the tree attached to it and its links with the Hecke tree defined in the previous section.

Eventually, the last section consists in the statement of adelic mixing for $\mathrm{PGL}(2)$ and a sketch of proof of the equidistribution of Hecke spheres.

\section{Hecke trees and Hecke correspondance}

The Hecke trees are the main object of this paper. We will see it appear in several ways. The first one is the more concrete one and we will gradually move to the adelic version of it (in subsection \ref{ss:adelictrees}).

\subsection{Construction of the Hecke trees in the modular surface}\label{ss:constructiontree}

We want to consider $X(1)=\PSL(2,\Z)\backslash \HH$ as the space of similarity classes of lattices in $\C\simeq \R^2$. Let us quickly review how it is done. First of all the identification $\C\simeq \R^2$ is realized through the choice of the canonical basis $(1,i)$ of $\C$. A lattice in $\C$ is a discrete subgroup of rank $2$, i.e. of the form $\Z e_1+\Z e_2$, where $(e_1,e_2)$ is a $\R$-basis of $\C$. A \emph{marked} lattice in $\C$ is a lattice with a chosen basis $(e_1,e_2)$. The space of marked lattices may be identified to $\GL(2,\R)$\footnote{The space of marked lattices is more precisely a principal homogeneous space under $\GL(2,\R)$: it is homogeneous and the stabilizer of any point is trivial. So we have to choose a base point to fix the identification between the space of marked lattices and $\GL(2,\R)$. Here we take the lattice $\Z+\Z i$ associated to the canonical basis $(1,i)$.}, as a marked lattice is uniquely defined by the basis $(e_1,e_2)$. In order to keep coherency in the notations, the action of $\GL(2,\R)$ on lattices is a right one, induced by the action on $\C$:
$$\begin{pmatrix} x&y\\z&t\end{pmatrix}\cdot (a+ib) = ax+by+i(az+bt).$$ 
Now the space of marked lattices \emph{up to homothety} is identified to $\PGL(2,\R)$; and the space of marked lattices \emph{up to similarity} is then naturally identified with 
$$\PGL(2,\R)/\mathrm{O}(2)\simeq \HH.$$
And if you want to forget the marking, you still have to mod out by the stabilizer of the lattice $\Z+\Z i$, that is $\PGL(2,\Z)$. Up to an easy reduction from $\PGL$ to $\PSL$, we are done. Indeed, the space of lattices up to similarity is:
$$\PGL(2,\Z)\backslash \PGL(2,\R)/\mathrm{O}(2)\simeq \PSL(2,\Z)\backslash \HH=X(1).$$

Now choose a prime number $p$ and pick a point $[\Lambda]\in X(1)$ (of course $[\Lambda]$ denotes the class of the lattice $\Lambda$). Consider the set:
$$\{\textrm{lattice }\Lambda'\, | \, \Lambda'<\Lambda\textrm{ has index }p\}.$$ We may describe this set:
\begin{lemma}
An element $\Lambda'$ of the above set is given by the choice of the line:
$$\Lambda'/p\Lambda \textrm{ in } \Lambda/p\Lambda \simeq (\Z/p\Z)^2.$$ 
\end{lemma}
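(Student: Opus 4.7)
The plan is to identify sublattices of index $p$ with subgroups of a finite quotient, then invoke the $\mathbb{F}_p$-vector space structure of that quotient. Explicitly, I would argue in two steps: first show that every index-$p$ sublattice automatically contains $p\La$, and then invoke the lattice correspondence theorem for the quotient map $\Lambda \twoheadrightarrow \Lambda / p\Lambda$.

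\smallskip

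For the first step, let $\Lambda'<\Lambda$ have index $p$. The quotient $\Lambda/\Lambda'$ is then a finite abelian group of order $p$, hence cyclic, and in particular multiplication by $p$ is the zero map on it. This says exactly that $p\Lambda\subset \Lambda'$.

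\smallskip

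For the second step, since $p\Lambda\subset \Lambda'\subset \Lambda$, the correspondence theorem identifies $\Lambda'$ with its image $\Lambda'/p\Lambda$ inside $\Lambda/p\Lambda$, and this is a bijection between sublattices of $\Lambda$ containing $p\Lambda$ and subgroups of $\Lambda/p\Lambda$. The indices match: $[\Lambda:\Lambda'] = [\Lambda/p\Lambda : \Lambda'/p\Lambda]$. Now $\Lambda/p\Lambda$ is a $2$-dimensional vector space over $\mathbb{F}_p\simeq \Z/p\Z$ (any $\Z$-basis $(e_1,e_2)$ of $\Lambda$ descends to an $\mathbb F_p$-basis). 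A subgroup of a $2$-dimensional $\mathbb F_p$-vector space is automatically an $\mathbb F_p$-subspace (it is stable under multiplication by every element of $\{0,1,\dots,p-1\}$), and having index $p$ is equivalent to being a $1$-dimensional subspace, i.e. a line.

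\smallskip

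Putting the two steps together gives the claimed bijection $\Lambda' \longleftrightarrow \Lambda'/p\Lambda$ between index-$p$ sublattices of $\Lambda$ and lines of $\Lambda/p\Lambda$. I don't anticipate a real obstacle here; the only thing worth being careful about is the first step (checking $p\Lambda\subset \Lambda'$ before invoking the correspondence theorem), since without it one would not know a priori that $\Lambda'$ comes from a subgroup of the finite quotient $\Lambda/p\Lambda$.
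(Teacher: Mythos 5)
Your proof is correct and follows essentially the same route as the paper: establish $p\Lambda\subset\Lambda'$, pass to the quotient $\Lambda/p\Lambda\simeq(\Z/p\Z)^2$, and match index-$p$ sublattices with lines. You even fill in a small step the paper leaves implicit, namely why an index-$p$ sublattice must contain $p\Lambda$ (the quotient $\Lambda/\Lambda'$ has order $p$, so is killed by multiplication by $p$).
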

\begin{proof}
Indeed, $p\Lambda$ is included in $\Lambda'$ (because $\Lambda'$ has index $p$ in $\Lambda$), so $\Lambda'$ projects to $\Lambda/p\Lambda$. The projection is a subgroup and we compute its cardinal: the cardinal of $\Lambda'/p\Lambda$ is $$\frac 1p\times\mathrm{Card}\left(\Lambda'/p\Lambda\right)=p.$$
The projection is indeed a line in $\Lambda/p\Lambda$.

Conversely, given a line  $L$ in $\Lambda/p\Lambda$, there is a unique subgroup $\Lambda'$ of $\Lambda$ of index $p$ which projects to this line: as we have seen, we must have $p\Lambda\subset\Lambda'$. So $\Lambda'$ is exactly the preimage of the line $L$ in $\Lambda$ under the projection $\Lambda\to \Lambda/p\Lambda$.
\end{proof}

Hence the number of lattices $\Lambda'$ in the above defined set is $p+1$, as the cardinal of the projective line $\PP((\Z/p\Z)^2).$ Their classes $[\Lambda']$ are called the ($p$-Hecke)-neighbors of $[\Lambda]$ in $X(1)$. A straightforward verification shows that the set of neighbors of some $[\Lambda]$ in $X(1)$ does not depend on the choice of the representative $\Lambda$. Seeing $X(1)$ as $\PSL(2,\Z)\backslash \HH$, we may write this relation explicitly: the neighbors of the class of $z\in\HH$ are the classes of $pz$ and the $\frac{k+z}{p}$, $0\leq k\leq p-1$, see figure \ref{fig:heckeneighbors}.

\begin{figure}
  \includegraphics{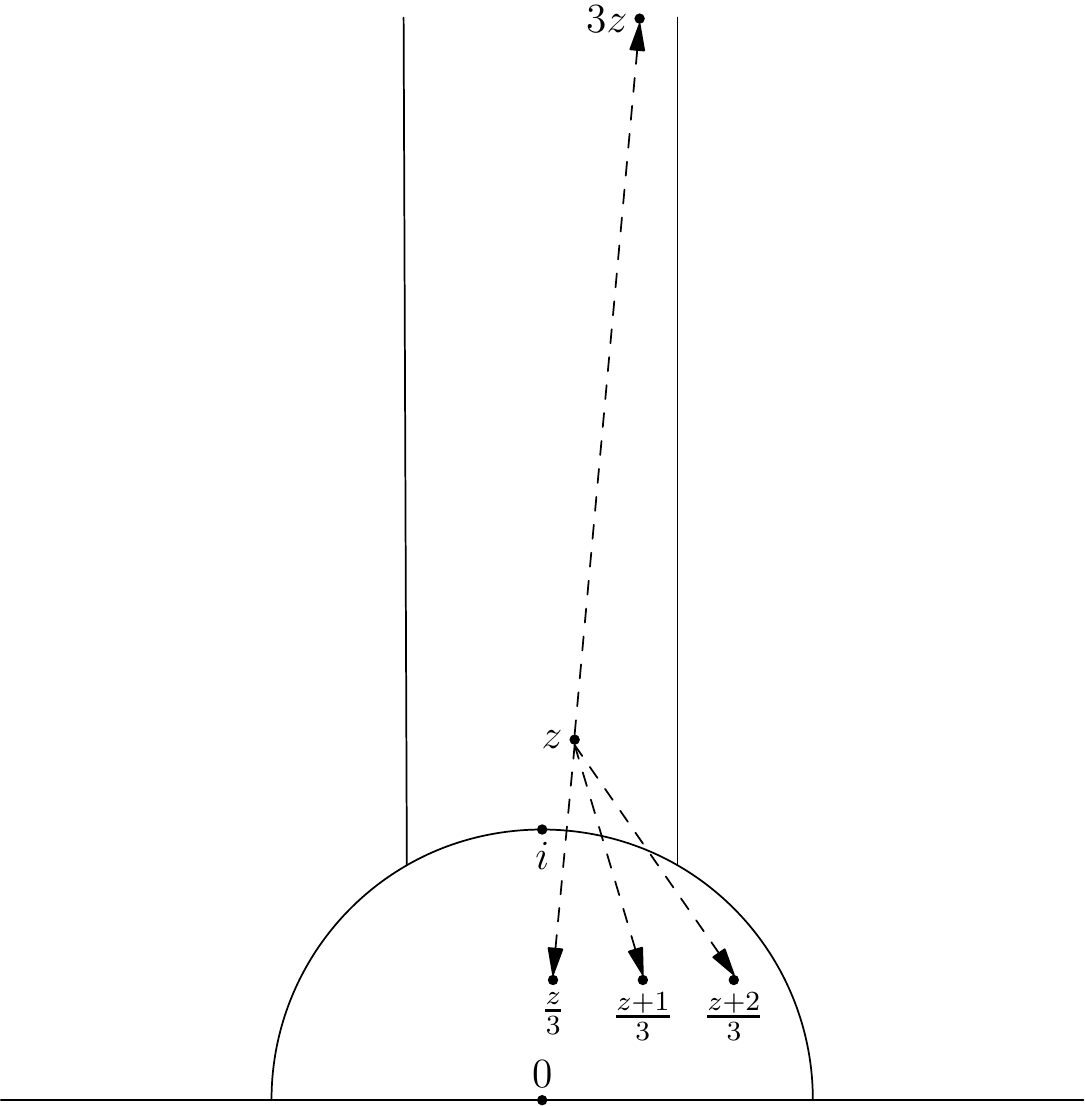}
  \caption{Hecke-neighbors for $p=3$}\label{fig:heckeneighbors}
\end{figure}

\begin{remark}
The neighbors are not automatically distinct: if $\Lambda =\Z+\Z i$, the two lattices
$$\Lambda_1'=\Z p+\Z i \textrm{ and } \Lambda'_2=\Z+\Z pi$$
are both of index $p$ in $\Lambda$ but are in the same similarity class. They project to the same point in $X(1)$. We will sometimes add multiplicities to deal with that.
\end{remark}

Moreover the "neighbor" relation is a reflexive one. Indeed, if the class $[\Lambda']$ is a neighbor of $[\Lambda]$, then we may choose the representatives so that $\Lambda'$ is a sublattice of index $p$ in $\Lambda$. But, then, $p\Lambda$ is a sublattice of index $p$ in $\Lambda'$. And the class $[p\Lambda]=[\Lambda]$ is a neighbor of $[\Lambda']$. At this point, the neighbor relation constructs a graph.

Eventually, one may prove that this relation has no cycle. The idea is that, if 
$$\Lambda_0 < \Lambda_1<\ldots<\Lambda_k$$
is a sequence of lattices such that each time $\Lambda_i<\Lambda_{i+1}$ has index $p$, then either there exists $i$ such that $p^2 \Lambda_i=\Lambda_{i+2}$ (we are backtracking in the graph) or
there are two vectors $e_1$ and $e_2$ in $\Lambda_0$ such that:
$$\textrm{For all }i,\; \Lambda_i=\Z e_1+\Z p^{-i}e_2.$$
In the second case, $\Lambda_0$ and $\Lambda_k$ are not similar.
This is not obvious but rather elementary (see \cite{serre2}).

\medskip

Consider $\tau_p$ the set of vertices of the complete tree of valence $p+1$, rooted at some vertex $t_p^0$. The above construction defines a map $h_p^{[\Lambda]}$ from $\tau_p$ to $X(1)$, sending $t_p^0$ to $[\Lambda]$ and neighbors in the tree to neighbors\footnote{This map is really only defined up to isometries of the tree fixing the root $t_p^0$. This subtlety will not interfere and we will not be more precise at this point. This map will be formally defined in section \ref{ss:adelictrees}. Beware that it may be non-injective as in the example of $\mathbb Z[i]$.} in $X(1)$ without backtracking. Note eventually that the relation "to be a sublattice of index $p$" commutes to the action of $\textrm{O}(2)$. Hence we might as well work in the unit tangent bundle $\PSL(2,\Z)\backslash \PSL(2,\R)$ of $X(1)$. And we get a mapping of the tree $\tau_p$ in $\PSL(2,\Z)\backslash \PSL(2,\R)$.

Let us now briefly describe another point of view, which makes clearer how one may vary the group $G$ and the lattice $\Gamma$. The matrix $g_p=\begin{pmatrix} p&0\\0&1\end{pmatrix}$ belongs to the commensurator of $\PSL(2,\Z)$: the group $g_p\PSL(2,\Z)g_p^{-1}\cap \PSL(2,\Z)$  is a subgroup of finite index in both $\PSL(2,\Z)$ and $g_p\PSL(2,\Z)g_p^{-1}$. More precisely, the elements of $g_p\PSL(2,\Z)$ fall into $p+1$ different classes modulo $\PSL(2,\Z)$. One easily check that:
$$\PSL(2,\Z)g_p\PSL(2,\Z)=\PSL(2,\Z)\{g_p;\; \begin{pmatrix}1&k\\0&p\end{pmatrix}\textrm{ for }0\leq k\leq p-1\}.$$
The $p+1$ lattices $g_p.(\Z+i\Z)$ and $\begin{pmatrix}1&k\\0&p\end{pmatrix}.(\Z+i\Z)$ are exactly the neighbors of $\Z+i\Z$: check they are all distinct and of index $p$ in $\Z+i\Z$.
In other terms, the neighbors of $[\Z+\Z i]$ are exactly the elements of 
$$\PSL(2,\Z)\backslash \PSL(2,\Z)g_p\PSL(2,\Z).$$ 
For another lattice $\Gamma$ of $\PSL(2,\R)$, or of another group $G$, a notion of neighbors may still be defined for any element $g$ of the commensurator of $\Gamma$: the commensurator of $\Gamma$ is the group of elements $g$ such that $g\Gamma g^{-1}\cap \Gamma$ has finite index in $G$ and $\g\Gamma g^{-1}$. It is exactly the property needed for the above construction. And the whole construction described above makes sense. It is especially interesting for groups $\Gamma$ with big commensurator, that is arithmetic lattices in $G$ (such as $\PGL(2,\Z)$).

\subsection{Product of trees and spheres}

Fix a point $[\Lambda]\in X(1)$. For each prime number $p$, we have constructed a mapping:
$$h_p^{[\Lambda]} \: :\: \tau_p \to X(1).$$
Recall that the tree $\tau_p$ is rooted at $t_p^0$. And we denote by $d$ the distance in these trees given by the number of edges between two points.
We would like to consider as a whole the set of prime numbers. Indeed, keep in mind that we are interested in arithmetics. 
And we will use and abuse of the prime factorization of integers. 
Let us make a seemingly silly remark: in the factorization of each integer into primes, only a finite set of prime numbers appears;
Of course not a bounded one, but still finite. 
So, we will not exactly consider the set of prime numbers but work with its finite subsets. 

We consider the \emph{restricted} product of the trees $\tau_p$:
$$\tau := \left\{(t_p)_p\in \prod_{p\textrm{ prime}} \tau_p \: | \: \sum_p d(t_p,t_p^0) <\infty\right\}.$$
A point in $\tau$ has all its entries equal to $t_p^0$ but a finite set. Then we construct a mapping
$$h^{[\Lambda]} \: : \: \tau \to X(1)$$ in the following way. Consider a point$(t_p)_p$ in $\tau$. Let $x_0=[\Lambda]$, $x_2=h^{x_0}_2(t_2)$. 
And, recursively, for a prime number $p$, with $q$ the greatest prime number less than $p$, $x_p=h^{x_q}_p(t_p)$. 
This sequence indexed by the prime numbers becomes eventually stationary: as $(t_p)$ belongs to the restricted product, for $q$ large enough, $t_q$ always equals $t_q^0$ and the point $x_q$ does not move anymore: with the above notation, the next point in this sequence is $x_p=h^{x_q}_p(t^0_p)=x_q$.
We define $h^{[\Lambda]}$ to be this limit $x_p$. 
One may prove that the order in which we construct the sequence does not change the limit point.

It is maybe clearer to look at the spheres in the product of trees $\tau$. Let $N$ be an integer and $N=\prod_p p^{\nu_p}$ its factorization into prime factors. Remark that $\sum_p \nu_p<\infty$. So the set 
$$S_N:=\left\{(t_p)_p\in \prod_p \tau_p \: | \: \forall p,\: d( t_p,t_p^0)=\nu_p\right\}$$ is a subset of $\tau$. We call it the "sphere of radius $N$". 
Remark that we may give a more concrete interpretation: $h^{[\Lambda]}(S_N)$ is the set of classes $[\Lambda']$, where $\Lambda'$ is a sublattice of index\footnote{And I mean \emph{really} of index $N$ : that is $\Lambda'$ is a sublattice of index $N$ in $\Lambda$ and no lattice $\frac 1k\Lambda'$, $k\in\mathbb N$, is a sublattice of $\Lambda$.} $N$ in $\Lambda$: indeed, pick a point $(t_p)$ in $S_N$. Let us follow the sequence defining $h^{[\Lambda]}((t_p))$:
\begin{itemize}
\item $x_0=[\Lambda]$
\item if $q,p$ are two successive prime numbers, $x_p=h^{x_q}(t_p)$ is a sublattice of index\footnote{Same remark as above.} $p^{\nu_p}$ in $x_q$: recall that $t_p$ is at distance $\nu_p$ from the root, so in order to construct $x_p=h^{x_q}(t_p)$ you take $\nu_p$ times a neighbor, i.e. $\nu_p$ times a sublattice of index $p$ without backtracking.
\end{itemize}
At the end, $h^{[\Lambda]}((t_p))$ is a sublattice of index\footnote{Same remark as above.} $\prod_p p^\nu_p=N$ in $[\Lambda]$. And by letting $(t_p)$ vary in the sphere of radius $N$, you get every such sublattice.

The Hecke correspondence $T_N$ on $X(1)$ is the operation which associates to $[\Lambda]$ the set $h^{[\Lambda]}(S_N)$\footnote{This last set is weighted whenever $h^{[\Lambda]}$ is not injective. Later on, any sum on the elements of $T_N([\Lambda])$ is to be understood as weighted.}.

\subsection{Dynamical problems}

\subsubsection{Distribution of spheres}\label{ss:Heckecorrespondance}

The question is: if $N\to \infty$, how do the sets $T_N([\Lambda])$ look like ?

This question has been answered in many ways and is known to be directly related to "Ramanujan conjecture" \cite{sarnak}. In the case explained above, it should be attributed to Linnik-Skubenko \cite{linnik,skubenko}.
\begin{theorem}\label{th:equi1}
These sets equidistribute toward the hyperbolic area $\mu$ on $X(1)$:
$$\frac{1}{\textrm{Card}(S_N)}\sum_{t\in S_N} \mathrm{Dirac}_{h^{[\Lambda]}(t)} \rightharpoonup \mu.$$

In other terms, for any continuous and compactly supported function $\phi$ on $X(1)$, we have:
$$\frac{1}{\textrm{Card}(S_N)}\sum_{t\in S_N} \phi(h^{[\Lambda]}(t)) \to \int_{X(1)} \phi d\mu.$$
\end{theorem}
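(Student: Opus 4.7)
The plan is to follow the general Eskin--McMullen / Margulis strategy of Example 1, but in the adelic setting, exploiting the fact that the Hecke tree at $p$ is literally the Bruhat--Tits tree of $\PGL(2,\Q_p)$ and that all these trees glue together via the adeles into a genuine homogeneous dynamical system.

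First I would recast $X(1)$ adelically. By strong approximation for $\PSL(2)$ (or rather, a standard class-number-one argument) one has a canonical identification
\[
\PSL(2,\Z)\backslash\PSL(2,\R) \;\simeq\; \PGL(2,\Q)\backslash \PGL(2,\A)\,/\,K_f,
\]
where $K_f=\prod_p \PGL(2,\Z_p)$. Denote the adelic quotient by $Y$ and fix a base point $y_0\in Y$ representing $[\Lambda]$. The sphere $S_N$ in the product of trees corresponds, under the dictionary between $\tau_p$ and $\PGL(2,\Q_p)/\PGL(2,\Z_p)$ of Section~\ref{ss:constructiontree}, to a single double coset $K_f g_N K_f$ in $\PGL(2,\A_f)$, where $g_N$ has component $\mathrm{diag}(p^{\nu_p},1)$ at each prime $p\mid N$ and is trivial elsewhere. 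Thus the counting measure on $T_N([\Lambda])$, normalized by $\mathrm{Card}(S_N)$, is exactly the push-forward to $X(1)$ of the $K_f$-invariant probability measure supported on the translated piece $y_0\cdot K_f g_N K_f$ in $Y$.

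Next I would invoke adelic mixing (the theorem to be stated in the final section): the regular representation of $\PGL(2,\A)$ on $L^2_0(Y)$ has a uniform spectral gap, hence matrix coefficients $\langle g\cdot \phi_1,\phi_2\rangle$ decay to zero as $g$ leaves compact sets in $\PGL(2,\A)$, at a rate controlled by the Harish-Chandra $\Xi$-function. Applied to test functions of the form $\phi_i = \mathbf 1_{K_f\Omega}/\mathrm{vol}$ with $\Omega$ a small neighborhood in $\PSL(2,\R)$, this gives that the translate of the base-point measure by $g_N$ converges weakly to the normalized Haar measure on $Y$ as $N\to\infty$ (equivalently, as $\max_p \nu_p\to\infty$; for the case where one single $\nu_p$ stays bounded one uses mixing at the other primes).

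To turn this mixing statement into the desired equidistribution of the orbit $y_0\cdot K_f g_N K_f$, I would use the classical thickening/wavefront argument: write the $K_f$-orbit measure as a convolution with a small identity neighborhood, apply mixing to the thickened measure, and then undo the thickening using continuity of the test function. Projecting the resulting equidistribution in $Y$ down to $X(1)=Y/(K_f\cdot \mathrm{SO}(2))$ yields Theorem~\ref{th:equi1}.

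The main obstacle is, of course, adelic mixing itself: this is a genuine input (it relies on bounds toward Ramanujan, i.e.\ spectral gap uniform across all primes). The reinterpretation steps and the wavefront argument are mostly bookkeeping once the adelic language is in place; the serious content is packaged into the mixing theorem, which is exactly why the last section of the paper is devoted to stating it and sketching how it yields Theorem~\ref{th:equi1}.
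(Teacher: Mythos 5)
Your proposal follows essentially the same route as the paper: adelic reinterpretation of the Hecke sphere as the translate of the base-point orbit by the double coset of $h_N$, adelic mixing (property $\tau$ / bounds toward Ramanujan) as the one serious input, and a thickening of the singular orbit measure to make it absolutely continuous. The only difference is cosmetic: no wavefront lemma is actually needed here, because $h_N$ has trivial component in $\PGL(2,\R)$ while the thickening $\Omega$ lives entirely in the real factor, so the thickening commutes exactly with the translation by $h_N$ and is simply inert under the dynamics.
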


This theorem has many generalizations (varying $G$, $\Gamma$ and $H$) and also many proofs. The main proofs follow:
\begin{itemize}
\item harmonic analysis (cf. Sarnak \cite{sarnak}),
\item use of adelic mixing (cf. Clozel-Oh-Ullmo \cite{clozel-oh-ullmo}...)
\item use of Ratner theorem for unipotent flows (Eskin-Oh \cite{eskin-oh}, Duke-Rudnick-Sarnak \cite{duke-rudnick-sarnak}, Eskin-Mozes-Shah \cite{eskin-mozes-shah}...)
\end{itemize}
We will try to explain here the second approach.

\subsubsection{Distribution of closed geodesics}

Less fundamental in a number-theoretic point of view but still natural for geometers is the second question. This one takes place at the level of the unit tangent bundle $\PSL(2,\Z)\backslash \PSL(2,\R)$ of $X(1)$.

Suppose $x \in \PSL(2,\Z)\backslash \PSL(2,\R)$ induces a closed geodesic. We will see later on that any $y$ in the image $h^x(\tau)$ of the restricted product of trees $\tau$ still induces a closed geodesic. So one may wonder:\\
What can said be about these geodesics when $y$ drift apart $x$ in $h^x(\tau)$.

We will not go into this question, but with the present introduction the reader may study the paper of Aka-Shapira \cite{aka-shapira}.

\bigskip

At this point, it is not clear how these objects and questions are related to the examples of the introduction: we see that $\PSL(2,\Z)$ and $\PSL(2,\R)$ keep appearing but who is $H$ ? Which dynamical system is mixing ?
Understanding this requires the introduction of $p$-adic fields and adeles. We do this in the next section, trying to keep the most dynamical point of view on these notions. Once we are acquainted with those algebraic objects, both questions may naturally be reinterpreted as evolution of sets under some kind of geodesic flow. We will state a mixing property for it; it will lead us to the answer to the first question.

\section{$p$-adic and adelic groups}

\subsection{$p$-adic fields}

I will not develop here the theory of $p$-adic fields (see \cite{serre} for an introduction).  We begin, of course, by fixing a prime number $p$.

Let us define the $p$-adic field $\Q_p$:
\begin{definition}
$\Q_p$ is the completion of $\Q$ for the $p$-adic absolute value $|\cdot|_p$.
\end{definition}

In this form, it may be abstract but it shows a fundamental feature of $p$-adic fields: they are counterparts of the real numbers $\R$. As $\R$, they are completion of $\Q$ for some absolute value. And Ostrowski's theorem states that the usual one and the $|\cdot|_p$ for $p$ prime are the only absolute values on $\R$. So, in order to reflect in a proper way properties of $\Q$ -- that is arithmetic -- in complete fields, you have to consider at once $\R$ and all the $\Q_p$. That is why the adeles were introduced. But before going on, let us present $p$-adic fields in a more concrete way, in order to get a bit of intuition.

First of all, the $p$-adic absolute value on $\Q$ is so defined: for a rational number $r\in \Q$, write it in the form $r=p^k \frac a b $, with $a$ and $b$ coprime with $p$. Then:
$$|r|_p=|p^k \frac a b|_p=p^{-k}.$$
Note that if $r$ is an integer, then $|r|_p\leq 1$: $\Z$ becomes bounded with this absolute value. Its completion $\Z_p$ will then be compact. Let us define it before $\Q_p$: let 
$$\Z_p=\{(x_n)_{n\geq 0} \: | \: x_n\in \Z/p^n\Z\textrm{ and } x_{n+1}=x_n \, (\textrm{mod }p^n)\}.$$ 
We may define the absolute value $|\cdot |_p$ in $\Z_p$: 
$$|(x_n)_n|_p=p^{-k},$$ where $k${ is the greatest integer such that $x_j=0$ for all $j\leq k$.
For this absolute value, it is not hard to check that $\Z_p$ is a complete and compact set.
We have a natural injection $\left\{ \begin{matrix} \Z&\to &\Z_p \\ k&\mapsto & (k \, (\textrm{mod }p^n))\end{matrix}\right.$ and one checks that this is an isometry and the image is dense. This prove that $\Z_p$ is indeed the completion of $\Z$.
Topologically, $\Z_p$ is a Cantor set.

As follows from the definition, $\Z_p$ is a ring. Its fraction will be a complete field in which $\Q$ embeds isometrically for the absolute value $|\cdot|_p$ with  dense image. Hence this fraction field is the completion $\Q_p$. One can also write:
$$\Q_p=\bigcup_{n\geq 0} p^{-n} \Z_p.$$
With this last presentation, one sees that $\Z_p\cap\Z[\frac1p]=\Z$.
Let us insist on the fact that, from the $p$-adic point of view, $p^{-n}$ is bigger and bigger as $n$ grows.

\begin{remark}
For simplicity of notation, we will sometimes denotes $\R$ by $\Q_\infty$ and its usual absolute value by $|\cdot|_\infty$.
\end{remark}

We will denote by $\mathcal V$ the set of "places" of $\Q$, i.e. of different completions. We will write $$\mathcal V=\{\infty\}\cup\{\textrm{prime numbers }p\}.$$ Given the convention in the above remark and Ostrowski's theorem, the $\Q_\nu$'s for $\nu\in\mathcal V$ are the only completions of $\Q$. There is a very elegant and simple formula that relates these different completions, called the product formula:
$$\textrm{For any }x\in\Q\setminus\{0\}\textrm{ we have }\prod_{\nu\in\mathcal V} |x|_\nu =1.$$

\subsection{$S$-arithmetic rings and solenoids}

Let us take a finite subset $S$ containing $\infty$. We define:
$$\Q_S := \prod_{\nu\in S}\Q_\nu\textrm{ and }\Z_S=\Z\left[\{\frac1p ,\, p\in S\setminus\{\infty\}\}\right].$$
We have the diagonal injection of $\Q$ in these $\Q_S$. As $\Z_S\subset \Q$, it naturally embeds in $\Q_S$ (and even any $\Q_{S'}$ for any other $S'$ regardless to the relation between $S$ and $S'$).
Those objects, called $S$-arithmetic, allow to keep track of properties of rational numbers regarding the powers of the prime numbers $p\in S$ appearing. For example $\Q_S$, $S=\{\infty,2,3\}$ is the right place to study the rational solutions to an equation with denominator highly divisible by $2$ or $3$. The following theorem holds:
\begin{theorem}
\begin{itemize}
\item $\Q_S$ is a locally compact ring,
\item $\Z_S\subset\Q_S$ is a discrete cocompact subgroup. A fundamental domain is $[0;1[\times \prod_{p\in S\setminus\{\infty\}}\Z_p$.
\item If $S$ is a proper subset of a finite subset $S'\in \mathcal V$, $\Z_{S'}$ is dense in $\Z_S$. 
\end{itemize}
\end{theorem}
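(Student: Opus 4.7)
The plan has three pieces corresponding to the three bullets; the first two are essentially formal and the third is the substantive point.

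Part 1 is a formality. Each $\Q_\nu$ is a locally compact topological ring (for $\nu=\infty$ classical; for a finite place $p$ the sets $p^n\Z_p$ form a neighbourhood base of $0$ by compact open subgroups). A finite product of locally compact topological rings inherits both properties.

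For Part 2, discreteness: the neighbourhood $U=(-1,1)\times\prod_{p\in S\setminus\{\infty\}}\Z_p$ meets $\Z_S=\Z[p^{-1}:p\in S\setminus\{\infty\}]$ only at $0$, because an element $r$ of $\Z_S\cap U$ has $|r|_p\le 1$ at every finite $p\in S$, killing all $p$-denominators and forcing $r\in\Z$, whereupon $|r|_\infty<1$ gives $r=0$. For the fundamental-domain claim, given $x=(x_\infty,(x_p))\in\Q_S$, process the finite places one at a time: $x_p$ decomposes as $r_p+y_p$ with $r_p\in\Z[1/p]$ (its $p$-principal part) and $y_p\in\Z_p$; since $r_p$, seen diagonally, lies in $\Z_{p'}$ for every other $p'\in S\setminus\{\infty\}$, subtracting it pushes the $p$-coordinate into $\Z_p$ without disturbing the integrality at the other finite places. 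After treating all finite places, subtract an integer from the real coordinate to land in $[0,1)$---integers lie in every $\Z_p$, so no damage is done. Uniqueness of the representative is the discreteness argument applied to a difference of two candidates, and compactness of the closure $[0,1]\times\prod\Z_p$ then yields cocompactness.

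For Part 3 I read ``dense in $\Z_S$'' as ``dense in $\Q_S$'', since $\Z_{S'}\supset\Z_S$ makes the literal statement vacuous. The plan is to combine Part 2 with the Chinese Remainder Theorem and a density argument driven by a prime $q\in S'\setminus S$. Given $x\in\Q_S$ and $\epsilon>0$, reduce by Part 2 to $x\in[0,1)\times\prod\Z_p$. Pick $N$ with $p^{-N}<\epsilon$, then use CRT to find $m\in\Z$ with $m\equiv x_p\pmod{p^N}$ at each $p\in S\setminus\{\infty\}$, so that $|m-x_p|_p<\epsilon$ there; but $m-x_\infty$ can still be large. To correct at the archimedean place without spoiling the finite ones, set $M=\prod_{p\in S\setminus\{\infty\}}p^N$ and observe that any $m+kMq^{-J}$ with $k\in\Z$ and $J\ge 0$ lies in $\Z_{S'}$, still satisfies the $p$-adic inequalities (because $q$ is a unit in $\Z_p$ at each $p\in S$ while $p^N\mid M$), and that $\{kMq^{-J}:k\in\Z,\,J\ge 0\}$ is dense in $\R$ since $Mq^{-J}\to 0$. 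Choose $k,J$ to make $|m+kMq^{-J}-x_\infty|<\epsilon$.

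The whole argument hinges on the existence of a prime $q\in S'\setminus S$: its inverse supplies arbitrarily fine real perturbations that are invisible at every place of $S$, exactly the mechanism that would fail were $S'$ equal to $S$.
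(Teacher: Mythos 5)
Your argument is correct, but there is nothing in the paper to compare it against: the theorem is stated there as a known fact, with no proof given (the standard references would be \cite{serre} or \cite{platonov-rapinchuk}). Your treatment of the first two bullets is the standard one and is complete; in particular the decomposition of $x_p$ into its $p$-principal part in $\Z\left[\frac1p\right]$ plus a $p$-adic integer, processed place by place with the archimedean coordinate handled last, is exactly the right mechanism, and your discreteness argument via the neighbourhood $(-1,1)\times\prod_p\Z_p$ is clean. For the third bullet you are right that the statement as printed is garbled ($\Z_S\subset\Z_{S'}$ makes ``dense in $\Z_S$'' vacuous), and your reading ``$\Z_{S'}$ is dense in $\Q_S$'' is a true and non-trivial statement which your CRT-plus-$q^{-J}$-perturbation argument proves correctly; the point that a prime $q\in S'\setminus S$ is a unit at every finite place of $S$ while $Mq^{-J}\to 0$ in $\R$ is precisely the strong-approximation mechanism. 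One caveat: the way the paper actually \emph{uses} this bullet later (in the solenoid lemma) is to conclude that $\Z\left[\frac1p\right]$ is dense in $\Q_p$, i.e.\ density in the product over $S'\setminus\{\infty\}$ rather than over $S$; that variant omits the archimedean place instead of a finite one, and follows even more directly from $\Z$ being dense in $\Z_p$ together with $\Q_p=\bigcup_n p^{-n}\Z_p$. You may want to record both versions, since they are the two flavours of the same approximation principle (omit one place, get density in the product over the remaining ones). A last trivial remark: your choice of $N$ should be made uniformly over the finitely many $p\in S\setminus\{\infty\}$, which is harmless.
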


Let us take some time to describe a dynamical system attached to this object, which will be a solenoid in the dynamical sense: a fibration over a circle by a Cantor set, whose monodromy has dense orbits in any fiber. For this description we take $S=\{\infty,p\}$, but it works the same with different choices. Recall that with the above definition, $\Z\left[\frac1p\right]$ is embedded in $\R\times \Q_p$ diagonally. Consider the space 
$$Y=\Z\left[\frac1p\right] \backslash \; (\R\times \Q_p).$$ 
We want to define  a projection $\Pi: Y \to \Z\backslash \R$ from $Y$ to the circle. The following lemma is the key point:
\begin{lemma}
Consider a point $(x_\infty,x_p)\in\R \times \Q_p$. Then there exists $z\in \Z\left[\frac1p\right]$ such that
$$\left\{z'\in\Z\left[\frac1p\right] \: | \: z'+x_p\in \Z_p\right\}=z+\Z.$$ 
\end{lemma}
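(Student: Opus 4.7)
The plan is to exploit the decomposition $\Q_p = \bigcup_{n \geq 0} p^{-n}\Z_p$ to produce one explicit $z$ satisfying $z + x_p \in \Z_p$, and then upgrade uniqueness-up-to-$\Z$ by using the cleanly identifiable intersection $\Z\bigl[\tfrac{1}{p}\bigr] \cap \Z_p = \Z$. Note that the real coordinate $x_\infty$ plays no role in the condition, so the proof reduces to a purely $p$-adic statement about $x_p$.

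First I would establish existence of one witness $z_0$. Given $x_p\in\Q_p$, choose the smallest integer $n\geq 0$ with $p^n x_p\in\Z_p$; then reduce $p^n x_p$ modulo $p^n\Z_p$ and pick an ordinary integer $a\in\Z$ with $p^n x_p - a\in p^n\Z_p$ (using the natural identification $\Z_p/p^n\Z_p \simeq \Z/p^n\Z$ coming from the density of $\Z$ in $\Z_p$). Dividing by $p^n$ gives $x_p - a/p^n\in\Z_p$, so setting $z_0 = -a/p^n\in\Z\bigl[\tfrac{1}{p}\bigr]$ provides an element with $z_0 + x_p\in\Z_p$. Concretely this is nothing more than subtracting off the "negative-power part" of the $p$-adic expansion of $x_p$.

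Next I would describe the full solution set. For any other $z'\in\Z\bigl[\tfrac{1}{p}\bigr]$ one has
$$z'+x_p\in\Z_p \iff (z'-z_0) + (z_0 + x_p)\in \Z_p \iff z' - z_0\in \Z_p,$$
because $z_0+x_p\in\Z_p$ by construction and $\Z_p$ is a subgroup of $\Q_p$. Thus the set in question equals $z_0 + \bigl(\Z\bigl[\tfrac{1}{p}\bigr]\cap\Z_p\bigr)$.

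The final step, which is the only point that requires a small remark, is identifying $\Z\bigl[\tfrac{1}{p}\bigr]\cap\Z_p$ with $\Z$. This is immediate from the $p$-adic absolute value: every element of $\Z\bigl[\tfrac{1}{p}\bigr]$ can be written as $a/p^k$ with $a\in\Z$ coprime to $p$ and $k\geq 0$, and its $p$-adic absolute value is $p^k$, which is at most $1$ only when $k=0$. Combining this with the previous step yields the announced description $z_0 + \Z$, and one may take $z = z_0$. I do not expect any serious obstacle; the one place to be careful is not to confuse the statements $\Z\bigl[\tfrac{1}{p}\bigr]\cap\Z_p = \Z$ (inside $\Q_p$) and the diagonal embedding viewpoint from the previous theorem, but these are compatible and the argument stays entirely on the $p$-adic side.
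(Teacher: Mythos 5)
Your proof is correct and follows essentially the same route as the paper's: find one witness $z_0$ with $z_0+x_p\in\Z_p$, then observe that any two witnesses differ by an element of $\Z\left[\frac1p\right]\cap\Z_p=\Z$. The only difference is cosmetic: the paper gets existence in one line from the density of $\Z\left[\frac1p\right]$ in $\Q_p$ (quoted from the preceding theorem), whereas you construct $z_0$ explicitly by stripping off the negative-power part of the $p$-adic expansion — which amounts to reproving that density in this instance.
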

\begin{proof}
The last point in previous theorem implies that $\Z\left[\frac1p\right]$ is dense in $\Q_p$. So there is some $z$ such that $z+x_p$ lies in the open subset $\Z_p$. Moreover, if some $z'$ also verifies $z'+x_p\in \Z_p$, then we have ($\Z_p$ is a ring):
$$z-z'=(z+x_p)-(z'+x_p)\in \Z\left[\frac1p\right]\cap \Z_p=\Z.$$
\end{proof}
We may define the image under $\Pi$ of $\Z\left[\frac1p\right]+(x_\infty,x_p)$: it is the class $\Z+(x_\infty+z)$ for any $z$ given by the lemma. We may check that $\Pi$ is a fibration over the cercle, with fibers isomorphic to $\Z_p$.

There is a natural flow on $Y$: lift to $Y$ the flow $x\to x+t$ on the circle. Let us describe it precisely.
Choose a point $p=\Z\left[\frac1p\right]+(x_\infty,x_p)$ in $Y$, and for sake of simplicity, suppose we have chosen (with the lemma above) $x_p\in \Z_p$. So $\Pi(p)=\Z + x_\infty$. Then for all $t\in \R$ define 
$$p_t:=\Z\left[\frac1p\right] + (x_\infty+t,x_p).$$
Remark the projection $\Pi(p_t)$ is $\Z + (x_\infty+t)$. Moreover, using the action of $\Z$, we may write ($\{t\}$ and $\lfloor t\rfloor$ are the fractional and integral parts of $t$):
$$p_t=\Z\left[\frac1p\right] + (x_\infty+\{t\},x_p-\lfloor t\rfloor ).$$
\begin{figure}
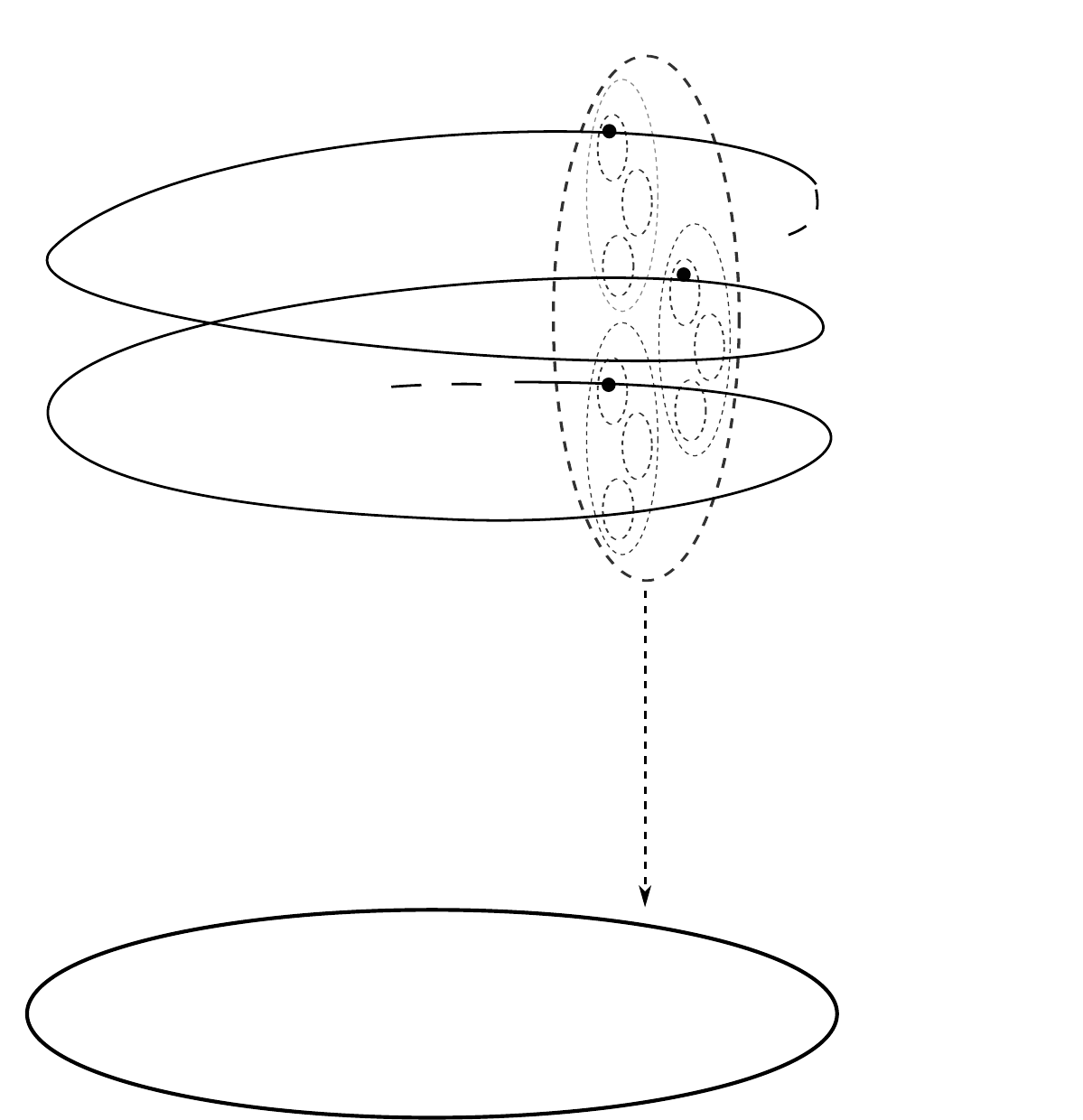
\caption{A solenoid}\label{fig:sol}
\end{figure}
So we return in the fiber of $p$ at each integer time, but we move in this fiber. This motion is the monodromy of the fibration and is given by:
$$p_n=\Z\left[\frac1p\right] + (x_\infty,x_p-n).$$
As $\Z$ is dense in $\Z_p$, the orbit $p_n$ is dense in the fiber of $p$.

All this may be summarized in a double quotient:
$$\Z\left[\frac1p\right] \backslash \R\times \Q_p / \Z_p =\Z\backslash \R.$$
By varying the compact $\Z_p$, one may vary the circle. For example, if we had taken $p\Z_p$ instead, we would have got $p\Z \backslash \R$.

\subsection{Adeles}

Until now, we were only able to consider a finite number of prime numbers. This is not natural or very interesting from an arithmetical point of view. One would like to consider all primes at once. But considering the mere product of all the $\Q_\nu$'s is not a good solution: as $\Z_S$ was a discrete cocompact subgroup of $\Q_S$, we would like $\Q$ to be a discrete cocompact subgroup of the ring constructed. However the product would give a non locally compact ring.

So, as for the trees, we will do a restricted product: considering the diagonal embedding of $\Q$ in $\prod_\nu \Q_\nu$, we may remark that the image of any rational number is always in $\Z_p$ for $p$ large enough (any $p$ not appearing in the prime factorization of the denominator of the rational number). So we will restrict the product to such elements: their components should eventually be in $\Z_p$. This is how the adeles $\A$ are defined:
$$\A:=\left\{(x_\nu)\in \prod_{\nu\in\mathcal V} \Q_\nu \: | \: \textrm{all but finitely many }x_\nu\textrm{ belong to }\Z_\nu\right\}.$$
The topology on $\A$ is generated by open sets of the form:
$$\prod_{\nu\in S}O_\nu\times \prod_{\nu\not\in S}\Z_\nu,$$
where $S\subset \mathcal V$ is finite, and each $O_\nu$ is an open subset of $\Q_\nu$.

With this definition, we may extend the theorem given in the $S$-arithmetic case:
\begin{theorem}
\begin{itemize}
\item $\A$ is a locally compact ring,
\item $\Q \subset \A$ is a discrete cocompact subgroup. A fundamental domain is $$[0;1[\times \prod_{p\in \mathcal V\setminus\{\infty\}}\Z_p.$$
\item For every prime number $p$, $\Q$ is dense in $\A_{(p)}=\Q_p\backslash \A$. 
\end{itemize}
\end{theorem}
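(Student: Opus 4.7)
My plan is to adapt the $S$-arithmetic argument for the first two bullets, and for the third one rely on a strong-approximation step in which the unconstrained prime $p$ provides the flexibility needed to glue archimedean and $q$-adic approximations.

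First I would handle local compactness and the ring structure of $\A$: basic open sets of the form $\prod_{\nu\in S}O_\nu\times\prod_{\nu\notin S}\Z_\nu$, with each $O_\nu$ relatively compact in $\Q_\nu$, have compact closure by Tychonoff, since each $\Z_\nu$ for $\nu$ prime is compact. Continuity of addition and multiplication follows place by place, once one observes that $\Z_p$ is a subring of $\Q_p$, so the restricted-product condition is preserved by the ring operations. For discreteness of $\Q$, the neighbourhood $U=(-1,1)\times\prod_p\Z_p$ of $0$ meets $\Q$ only at $0$: an element $q\in\Q\cap U$ has $|q|_p\leq 1$ for every prime $p$, forcing $q\in\Z$ (no prime in its denominator), and then $|q|_\infty<1$ forces $q=0$.

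Next I would establish that $F=[0,1)\times\prod_p\Z_p$ is a fundamental domain. Given $x=(x_\nu)\in\A$, only finitely many primes $p$ satisfy $x_p\notin\Z_p$; for each such $p$, the decomposition $\Q_p=\Z[1/p]+\Z_p$ (implicit in $\Q_p=\bigcup_n p^{-n}\Z_p$) gives $r_p\in\Z[1/p]$ with $x_p-r_p\in\Z_p$. The crucial point is that $r_p\in\Z_{p'}$ whenever $p'\neq p$, since $p$ is a unit in $\Z_{p'}$; hence the finite sum $q_1:=\sum_p r_p\in\Q$ places $x-q_1$ inside $\R\times\prod_p\Z_p$. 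Subtracting a suitable $n\in\Z$ then moves the archimedean coordinate into $[0,1)$ without disturbing the $p$-adic components (since $\Z\subset\Z_p$). Uniqueness of the representative follows from the discreteness argument applied to $F-F\subset(-1,1)\times\prod_p\Z_p$, and cocompactness is immediate since $F$ lies in the compact set $[0,1]\times\prod_p\Z_p$.

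For the third item, density of $\Q$ in $\A_{(p)}=\Q_p\backslash\A$ is equivalent to density of $\Q+\Q_p$ in $\A$. Using the fundamental domain $F$ and the fact that $\Q_p$ can absorb the $p$-component, the task reduces to approximating any $(b_\infty,(b_q)_{q\neq p,\infty})$ with $b_\infty\in[0,1)$ and $b_q\in\Z_q$, in the topology of $\R\times\prod_{q\neq p,\infty}\Z_q$, by an element of $\Z[1/p]$ (note that any $r\in\Z[1/p]$ automatically lies in $\Z_q$ for every $q\neq p$). Fixing $\varepsilon>0$ and a finite set $S$ of primes not containing $p$, I would first use the Chinese Remainder Theorem to produce $n\in\Z$ congruent to $b_q$ modulo $q^{k_q}$ for each $q\in S$, with $q^{-k_q}<\varepsilon$, and then correct $n$ by a rational of the form $mM/p^j$ where $M=\prod_{q\in S}q^{k_q}$, $m\in\Z$, $j\geq 0$: such a correction is $q$-adically at most $q^{-k_q}<\varepsilon$ for $q\in S$ (by the ultrametric inequality) and still lies in $\Z_q$ for any other prime $q\neq p$, while by taking $j$ large the real numbers $\{mM/p^j:m\in\Z\}$ are $\varepsilon$-dense in $\R$, so $m$ can be chosen to bring $n+mM/p^j$ within $\varepsilon$ of $b_\infty$. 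The main obstacle is precisely this last simultaneous approximation: the factor $p^{-j}$ is what reconciles archimedean density with the $q$-adic congruences at primes of $S$, and this is why a prime whose component we are allowed to ignore is indispensable.
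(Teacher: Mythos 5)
The paper states this theorem without proof (as it does for its $S$-arithmetic analogue), so there is no in-text argument to compare against; judged on its own, your proof is correct and complete, and it is the standard argument. The two points that actually carry the weight are both handled properly: the observation that the partial-fraction term $r_p\in\Z[1/p]$ remains a $p'$-adic integer at every $p'\neq p$, which makes the finite sum $q_1=\sum_p r_p$ land $x-q_1$ in $\R\times\prod_p\Z_p$; and, for the third bullet, the simultaneous approximation in which the denominator $p^j$ supplies archimedean density while staying a unit at all the primes of $S$, so the ultrametric inequality preserves the congruences obtained from the Chinese Remainder Theorem. This last step is exactly the density of $\Z[1/p]$ in $\R\times\prod_{q\neq p}\Z_q$, i.e.\ the adelic counterpart of the density of $\Z[1/p]$ in $\Q_p$ that the paper uses in its solenoid lemma, so your route is the natural one.
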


A crucial consequence of the first point is that $\A$ has a Haar measure. It is defined (up to normalizations) as the product of Lebesgue measure and the Haar measures of the $\Q_p$.

It will be helpful to understand what does "going to $\infty$" mean for a sequence in $\A$. Formally, it means that we are leaving any compact set. But let us have a look at two sequences which go to $\infty$ in a very different way:
\begin{itemize}
\item Going to $\infty$ "vertically": for example the sequence $u_k=2^{-k}$. Here we see that a projection on  $\Q_2$ is going to $\infty$.
\item Going to $\infty$ "horizontally": for example the sequence $(v_k)_{k\geq 0}$ where $v_k$ is the $k$-th prime number. Then, for all prime $p$, as soon as $v_k>p$, the projection of $v_k$ to $\Q_p$ is inside $\Z_p$. But this sequence is going to $\infty$ because the greatest prime number $q$ for which it does not project inside $\Z_q$ goes to $\infty$ with $k$.
\end{itemize}
Now the simple sequence $v_k=\frac1k$ presents a mixed behavior: it sure goes to $\infty$, but sometimes 
(at big powers of a fixed prime number $p$) in the vertical direction: $\frac1{p^n}$ is big in $\Q_p$, but in $\Z_q$ 
for $q\neq p$; and sometimes (when $k=q$ is a prime number) in the horizontal direction : $\frac{1}{q}$ is not so big 
in $\Q_q$, but as $q=k$ goes to $\infty$, it becomes big in $\A$. From now on, we will use the notation $x\to \infty$ to denote the property "$x$ leaves any compact set".

It is also worth noting that these construction extends without difficulty to the case of number fields instead of $\Q$. 

\subsection{Groups}

Having constructed several ring, we may consider the groups of points of an algebraic groups over them. I do not want to go into any formal definition of algebraic groups here. See \cite{platonov-rapinchuk} for a reference on algebraic groups and their $S$-arithmetic or adelic points. For us it will be a subgroup of $\mathrm{GL}(n)$ defined by actual equations with coefficients in $\Z$; and also $\PGL(2)$, even if it does not belong to the previous class. Other examples are the classical groups $\SL(2)$, $\mathrm{SO}(2)$...

Given such a group $G$, we will denote by $G_\Z$, $G_\Q$ ... (more generally $G_R$ for a ring $R$) the set of elements of $\mathrm{GL}(n,\Z)$, $\mathrm{GL}(n,\Q)$, $\mathrm{GL}(n,R)$ verifying the equations. All our rings are topological and locally compact; that imply in turn that $G_R$ is topological and locally compact. We will be interested in the relationship between $G_\Z$ and $G_\R$, $G_{\Z_S}$ and $G_{\Q_S}$ ($S$ finite) and $G_\Q$ and $G_\A$. We have the following theorem, due to Borel and Harish-Chandra. Recall first that a \emph{lattice} in a locally compact is a discrete subgroup such that the quotient has finite volume for the Haar measure. It is cocompact if the quotient is compact. 
\begin{theorem}[Borel and Harish-Chandra]
Fix any finite subset $S\subset \mathcal V$ containing $\infty$. We have equivalence between the statements:
\begin{enumerate}
\item $G_\Z$ is a lattice $G_\R$ (resp. cocompact),
\item $G_{\Z_S}$ is a lattice $G_{\Q_S}$ (resp. cocompact),
\item $G_\Q$ is a lattice $G_\A$ (resp. cocompact).
\end{enumerate}
\end{theorem}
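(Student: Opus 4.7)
The plan is to establish the two equivalences $(1)\Leftrightarrow(2)$ and $(2)\Leftrightarrow(3)$ by the same mechanism: comparing a group with its archimedean (resp.\ $S$-archimedean) part through a compact open subgroup sitting at the non-archimedean places. Discreteness is already handled by the previous theorem applied place-by-place; the content is the finite-volume (resp.\ cocompactness) statement.

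For $(1)\Leftrightarrow(2)$, I would start by splitting
$$G_{\Q_S} = G_\R \times \prod_{p\in S\setminus\{\infty\}} G_{\Q_p},$$
and introducing the compact open subgroup $K_S = \prod_{p\in S\setminus\{\infty\}} G_{\Z_p}$ of the non-archimedean factor. The arithmetic identity $\Z_S\cap \prod_{p\in S\setminus\{\infty\}}\Z_p = \Z$, which was the key to the previous theorem on $\Z_S$, promotes at the group level to
$$G_{\Z_S}\cap (G_\R\times K_S) = G_\Z.$$
The second ingredient is \emph{Borel's finiteness of class numbers}: the double coset space $G_{\Z_S}\backslash G_{\Q_S}/(G_\R\times K_S)$ is finite. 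Granting it, I pick representatives $g_1,\ldots,g_n$ and decompose
$$G_{\Z_S}\backslash G_{\Q_S} = \bigsqcup_{i=1}^n \Gamma_i\backslash (G_\R\times K_S),\qquad \Gamma_i := G_{\Z_S}\cap g_i(G_\R\times K_S)g_i^{-1}.$$
Each $\Gamma_i$ is discrete in $G_\R\times K_S$ and commensurable with $G_\Z$, because $g_iK_Sg_i^{-1}$ is a compact open subgroup of the non-archimedean factor commensurable with $K_S$. Since $K_S$ is compact and commensurable discrete subgroups are simultaneously lattices (resp.\ cocompact lattices), the equivalence $(1)\Leftrightarrow(2)$ follows, with the compact case included.

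For $(2)\Leftrightarrow(3)$, I would repeat the argument one floor higher. The restricted product defining $\A$ gives an open embedding $G_{\Q_S}\times K^S\hookrightarrow G_\A$, where $K^S := \prod_{\nu\notin S}G_{\Z_\nu}$ is compact open in the "outside $S$" factor. The identity $\Q\cap\prod_{\nu\notin S}\Z_\nu = \Z_S$ upgrades to $G_\Q\cap (G_{\Q_S}\times K^S) = G_{\Z_S}$, and Borel's theorem again guarantees that $G_\Q\backslash G_\A/(G_{\Q_S}\times K^S)$ is finite. The same decomposition into finitely many pieces $\Gamma_i'\backslash (G_{\Q_S}\times K^S)$, with $\Gamma_i'$ commensurable to $G_{\Z_S}$, transports the lattice and cocompactness properties between $G_{\Z_S}\backslash G_{\Q_S}$ and $G_\Q\backslash G_\A$.

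The main obstacle is the finiteness of these two double coset spaces: this is Borel's class number theorem, which is not formal --- it rests on reduction theory, on the choice of the $G_{\Z_p}$ as "good" (in practice hyperspecial for almost all $p$) compact open subgroups, and on some hypothesis on $G$ (in practice reductivity). Once this is in hand, the rest is Haar-measure bookkeeping, routine once one fixes compatible local Haar measures normalized so that $G_{\Z_p}$ has mass $1$ at almost every $p$, so that the product over all places makes sense on $G_\A$ and refines coherently on $G_{\Q_S}$ and $G_\R$.
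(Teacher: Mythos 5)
The paper does not prove this theorem: it is stated as a classical result of Borel and Harish-Chandra, with \cite{platonov-rapinchuk} given as the reference, so there is no internal proof to compare against. That said, your sketch is essentially the standard argument by which the three statements are shown to be equivalent (as opposed to the harder criterion for when they hold): split off the non-archimedean places as a compact open subgroup, use $G_{\Z_S}\cap(G_\R\times K_S)=G_\Z$ and its adelic analogue, invoke finiteness of the class number to cut the quotient into finitely many pieces, and observe that each piece is the quotient of a group with compact ``extra'' factor by a subgroup commensurable with $G_\Z$ (resp.\ $G_{\Z_S}$), so that finite covolume and cocompactness transfer. Two small points are worth making explicit. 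First, when you conjugate by the representatives $g_i$ you should normalize them to have trivial archimedean (resp.\ $S$-) component, which is harmless since $G_\R$ (resp.\ $G_{\Q_S}$) is a full direct factor of the group you are decomposing; only then is $g_i(G_\R\times K_S)g_i^{-1}=G_\R\times g_iK_Sg_i^{-1}$ and the commensurability argument clean. Second, you are candid that the finiteness of the double coset space is the real input, but it is worth noting that this finiteness is itself a consequence of the same reduction theory (Siegel sets) that underlies the Borel--Harish-Chandra criterion, so your argument is a reduction of the stated equivalence to that black box rather than an independent proof --- which is exactly how the standard references organize it, and is entirely adequate for the statement as given here.
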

Let us mention that the theorem gives a criterion for both case, and that if $G$ is a semisimple group, then $G_\Q$ is a lattice in $G_\A$. Our main interest will be in the group $\PGL(2)$ which is indeed semisimple. 
So the three statements are true (but the lattices are not cocompact, as seen for $\PGL(2,\Z)\subset \PGL(2,\R)$).

Moreover, when $G_\Z$ is a lattice in $G_\R$, one may understand the double quotients:
$$G_\Z\backslash G_\R \simeq G_{\Z\left[\frac1p\right]}\backslash G_\R\times G_{\Q_p} / G_{\Z_p}\simeq G_\Q\backslash G_\A / \prod_p G_{\Z_p}.$$
In this form, we may describe $G_\Q\backslash G_\A$ as a solenoid over the base $G_\Z\backslash G_\R$ (i.e. a fibration over $G_\Z\backslash G_\R$ with fiber Cantor sets and a dense monodromy of $G_\Z$).

\subsection{Adelic interpretation of Hecke trees}\label{ss:adelictrees}

Here we link the previous section with the technology we have just briefly presented: we interpret moving in the Hecke trees as a generalization of the geodesic flow, i.e. as the action of a diagonal matrix of $\PGL(2,\A)$. The first step is to see that, in some sense, the tree $\tau_p$ is an analogue to the hyperbolic disc: it identifies with the quotient of $\PGL(2,\Q_p)$ by the maximal compact subgroup $\PGL(2,\Z_p)$.
\begin{proposition}
The quotient $\PGL(2,\Q_p)/\PGL(2,\Z_p)$ has a natural structure of a $(p+1)$-complete tree.
\end{proposition}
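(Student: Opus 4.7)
The plan is to identify $\PGL(2,\Q_p)/\PGL(2,\Z_p)$ with the set of homothety classes of $\Z_p$-lattices in $\Q_p^2$, and then put on this set the obvious adjacency relation, mimicking exactly what the author did over $\Z$ in subsection \ref{ss:constructiontree}.

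First I would recall that a $\Z_p$-lattice $L$ in $\Q_p^2$ is a free $\Z_p$-submodule of rank $2$ containing a $\Q_p$-basis, and that $\GL(2,\Q_p)$ acts transitively on such lattices via change of basis, with stabilizer of the standard lattice $\Z_p^2$ equal to $\GL(2,\Z_p)$. Passing to homothety classes $[L] = \{\lambda L : \lambda \in \Q_p^\times\}$ kills the centre and gives a natural bijection
\[ \PGL(2,\Q_p)/\PGL(2,\Z_p) \longleftrightarrow \{\text{homothety classes of lattices in } \Q_p^2\}. \]

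Next I would declare $[L]$ and $[L']$ adjacent if there exist representatives with $pL \subsetneq L' \subsetneq L$, i.e.\ $L'$ is of index $p$ in $L$. Exactly as in the lemma proved in section~\ref{ss:constructiontree}, such $L'$ correspond bijectively to lines in $L/pL \cong \mathbf{F}_p^2$, giving $p+1$ neighbours of $[L]$. Symmetry of the relation follows as in the author's remark that $pL \subset L'$ is again an index-$p$ sublattice and $[pL]=[L]$. So the resulting graph is $(p+1)$-regular.

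It remains to prove connectedness and the absence of cycles. For connectedness I would use the elementary divisor theorem over the PID $\Z_p$: given two lattices $L,L'$, one can find a $\Z_p$-basis $(e_1,e_2)$ of $L$ and integers $a\leq b$ such that $L' = p^a\Z_p e_1 + p^b\Z_p e_2$. Replacing $L'$ by $p^{-a}L'$ in its homothety class, we may assume $L' = \Z_p e_1 + p^{b-a}\Z_p e_2 \subset L$, and then the explicit chain of intermediate lattices $L_k = \Z_p e_1 + p^k \Z_p e_2$, $0\leq k \leq b-a$, connects $[L]$ to $[L']$ in $b-a$ steps.

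The main obstacle is acyclicity. I would prove it by the same mechanism as in the paper's sketch for lattices over $\Z$: along any non-backtracking path $[L_0],[L_1],\dots,[L_k]$ the forbidden direction at each step is precisely the line $[pL_{i-1}] \subset L_i/pL_i$, and a short induction lets one choose a fixed $\Q_p$-basis $(e_1,e_2)$ in which $L_i = \Z_p e_1 + p^{-i}\Z_p e_2$ (up to homothety). The homothety classes of these standard lattices are distinguished by the $p$-adic valuation of the elementary divisor, so distinct indices $i$ give distinct classes, and in particular $[L_k]\neq [L_0]$ for $k\geq 1$. This rules out cycles and shows that the graph is the regular tree of valence $p+1$, on which $\PGL(2,\Q_p)$ acts by automorphisms through the identification above.
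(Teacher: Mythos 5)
Your proof is correct and follows essentially the same route as the paper's sketch: identifying the quotient with homothety classes of $\Z_p$-lattices, defining adjacency by index-$p$ inclusion, and counting neighbours via lines in $L/pL$. You supply more detail than the paper does on connectedness (elementary divisors) and acyclicity (the non-backtracking induction), points the paper only asserts and defers to Serre, but the underlying argument is the same.
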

We refer to Serre \cite{serre2} for a comprehensive presentation of these trees. We will just give a very sketchy idea of the proof. The main idea is that it is very much similar to the construction of the Hecke tree.

First of all, $\PGL(2,\Z_p)$ is open and closed in $\PGL(2,\Q_p)$. So the quotient is a discrete set. Moreover, $\PGL(2,\Z_p)$ is the stabilizer of (the homothety class of) the "maximal order" $\Z_p^2$ of $\Q_p^2$ (a maximal order is a free $\Z_p$-module of rank $2$). If $[\Lambda]$ and $[\Lambda']$ are two such orders, we say that they are "neighbors" if (up to the choice of suitable representatives), we have:
$$p\Lambda \subset \Lambda'\subset \Lambda,$$ with each inclusion being of index $p$.

As for the Hecke tree, once fixed $\Lambda$, such a $\Lambda'$ is determined by the choice of a line in 
$$p\Lambda \backslash \Lambda \simeq (p\Z \backslash \Z)^2.$$
So $[\Lambda]$ has $p+1$ neighbors. The relation is symmetric and has no cycle, giving to the quotient the structure of a tree.

One may say that $\PGL(2,\Q_p)$ is the unitary tangent bundle to this tree in the sense that each element of $\PGL(2,\Q_p)$ corresponds bijectively to an oriented marked geodesic: let $g=(v_1 \; v_2)$ be an element of $\mathrm{GL}(2,\Q_p)$ ($v_1$ and $v_2$ are two non-colinear elements of $\Q_p^2$). Then to $g$ is naturally associated the order $\Z_p v_1+\Z_p v_2$, that is a point of the tree. Moreover, one attach to $g$ the following geodesic:
$$\left\{[\Z_p p^n v_1+ \Z_p v_2], n\in \Z\right\}.$$
This geodesic is marked at $[\Z_p v_1+\Z_p v_2]$ and oriented "toward $v_2$": as $n\to\infty$ the first component becomes more and more negligible and asymptotically vanishes.

The geodesic flow is given by going one-step forward in this geodesic. From the previous description of the geodesic, it appears that this flow is given by the right action of the diagonal matrix
$$h_p:=\begin{pmatrix} p&0\\0&1\end{pmatrix}.$$ And the sphere of radius $n$ around some point $g\PGL(2,\Z_p)$ is the subset\footnote{Compare with the alternative point of view on the Hecke correspondence at the end of subsection \ref{ss:constructiontree}.}:
$$g\PGL(2,\Z_p)h_p^n\PGL(2,\Z_p)\textrm{ of }\PGL(2,\Q_p)/\PGL(2,\Z_p).$$

The similarity between the two constructions of the tree is not fortuitous. Indeed, the Hecke tree really is the projection of the tree of $\PGL(2,\Q_p)$ through the double quotient of previous section. Let us describe this projection. From now on we fix an identification of $\tau_p$ with $\PGL(2,\Q_p)/\PGL(2,\Z_p)$ sending the root of $\tau_p$ to the class of $\mathrm{Id}$.

Choose a point $x=\PGL(2,\Z)g$ in the unit tangent bundle $\PGL(2,\Z)\backslash \PGL(2,\R)$ to $X(1)$.
Then we may identify $\tau_p$ with $\{g\}\times \PGL(2,\Q_p)/\PGL(2,\Z_p)$; and thus consider it as a subset of $\PGL(2,\R)\times \PGL(2,\Q_p)/\PGL(2,\Z_p)$.
The map $h^x_p \: : \: \tau_p \to \PGL(2,\Z)\backslash \PGL(2,\R)$ is given by the following diagram:
$$\begin{matrix} \tau_p &\hookrightarrow & \PGL(2,\R)\times \PGL(2,\Q_p)/\PGL(2,\Z_p)\\
&&\downarrow \\
& & \PGL(2,\Z\left[\frac 1p\right])\backslash \PGL(2,\R)\times \PGL(2,\Q_p)/\PGL(2,\Z_p)
\end{matrix}$$
The last space is, as stated in the previous subsection, identified with the unit tangent bundle $\PGL(2,\Z)\backslash \PGL(2,\R)$.

\begin{remark}
The image of $\tau_p$ under $h^x_p$ does not depend on the choice of the representative $g$. But two different choices leads to two different maps $h^x_p$ differing by an automorphism of $\tau_p$ at the source.
\end{remark}

Until now, we have looked at only one prime number $p$. One can perform the same analysis as above in the adelic group $\PGL(2,\A)$, in order to get a description of the image of the spheres in $X(1)$. Let $x=\PGL(2,\Z)g\mathrm{SO}(2)$ be a point in $X(1)$, let $N$ be an integer and define the element $h_N\in \PGL(2,\A)$ whose component in $\PGL(2,\R)$ is $\mathrm{Id}$ and $\begin{pmatrix} N&0\\0&1\end{pmatrix}$ in each $\PGL(2,\Q_p)$. Then we have
\begin{lemma}\label{lem:spheres}
The sphere $h^x(S_N)$ of radius an integer $N$ with center a point $x=\PGL(2,\Z)g\mathrm{SO(2)}$ in $X(1)$ is the image of the set:
$$\left( g\mathrm{SO}(2)\times \prod_p \PGL(2,\Z_p)\right) h_N \prod_p \PGL(2,\Z_p)$$
in $X(1)$ via the identification
$$X(1)=\PGL(2,\Z)\backslash \PGL(2,\R)/\mathrm{SO}(2)\hspace{3cm}$$
$$\hspace{3cm}=\PGL(2,\Q)\backslash \PGL(2,\A)/\left(\mathrm{SO}(2)\times \prod_p \PGL(2,\Z_p)\right).$$
\end{lemma}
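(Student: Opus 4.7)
The plan is to reduce to a prime-by-prime computation and then reassemble using the adelic description of $h^x$ given at the end of Subsection \ref{ss:adelictrees}.

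First I would handle one prime $p$ at a time. Under the identification $\tau_p \simeq \PGL(2,\Q_p)/\PGL(2,\Z_p)$ that sends the root $t_p^0$ to the class of the identity, the sphere of radius $\nu_p$ around $t_p^0$ is, as recorded just after the introduction of the geodesic flow $h_p$, exactly the image in $\PGL(2,\Q_p)/\PGL(2,\Z_p)$ of the double coset $\PGL(2,\Z_p) h_p^{\nu_p} \PGL(2,\Z_p)$. Hence, by the diagram defining $h^x_p$, the image under $h^x_p$ of this single-prime sphere is the image in $\PGL(2,\Z)\backslash \PGL(2,\R)$ of
$$\{g\} \times \PGL(2,\Z_p)\, h_p^{\nu_p}\, \PGL(2,\Z_p)/\PGL(2,\Z_p).$$

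Next I would adelicize the argument. Writing $N = \prod_p p^{\nu_p}$, the sphere $S_N \subset \tau$ is precisely the restricted product $\prod_p S_{p^{\nu_p}}$, which makes sense because $\nu_p = 0$ for all but finitely many $p$ (the corresponding factor reducing to the singleton root). Running the single-prime construction in parallel at every prime, $h^x(S_N)$ is the image in $X(1) \simeq \PGL(2,\Q)\backslash \PGL(2,\A)/(\mathrm{SO}(2) \times \prod_p \PGL(2,\Z_p))$ of
$$\{g\} \times \prod_p \PGL(2,\Z_p)\, h_p^{\nu_p}\, \PGL(2,\Z_p).$$
To match this with the set in the lemma, I would observe that at each prime $p$ the $p$-adic component of $h_N$ is $h_N^{(p)} = \begin{pmatrix} N & 0 \\ 0 & 1 \end{pmatrix} \in \PGL(2,\Q_p)$, and writing $N = p^{\nu_p} u$ with $u \in \Z_p^\times$ gives $h_N^{(p)} = \begin{pmatrix} u & 0 \\ 0 & 1 \end{pmatrix} h_p^{\nu_p}$ with the unit factor belonging to $\PGL(2,\Z_p)$; hence $\PGL(2,\Z_p)\, h_N^{(p)}\, \PGL(2,\Z_p) = \PGL(2,\Z_p)\, h_p^{\nu_p}\, \PGL(2,\Z_p)$. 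Since $h_N$ acts trivially on the archimedean component and $\mathrm{SO}(2)$ and each $h_N^{(p)}$ act on disjoint factors of $\PGL(2,\A)$, the $\mathrm{SO}(2)$ factor can be inserted freely on the archimedean side and the product assembles to
$$\left( g\mathrm{SO}(2) \times \prod_p \PGL(2,\Z_p) \right) h_N \prod_p \PGL(2,\Z_p),$$
which is exactly the set claimed in the statement.

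The main obstacle is not a single computational step but the coherence of the chain of identifications
$$\PGL(2,\Z)\backslash \PGL(2,\R) \simeq \PGL(2,\Z\left[\tfrac{1}{p}\right])\backslash \PGL(2,\R) \times \PGL(2,\Q_p)/\PGL(2,\Z_p) \simeq \PGL(2,\Q)\backslash \PGL(2,\A)/\prod_p \PGL(2,\Z_p),$$
which is what legitimizes passing from the iterated single-prime construction of $h^x$ to its one-shot adelic counterpart. This consistency is precisely the content of the Borel--Harish-Chandra corollary recorded at the end of the previous subsection; once it is granted, the remainder of the proof is a bookkeeping unwinding of definitions.
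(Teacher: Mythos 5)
Your proof is correct and follows exactly the route the paper intends: the paper in fact states Lemma \ref{lem:spheres} without proof, leaving it as the assertion that "one can perform the same analysis as above in the adelic group", and your prime-by-prime identification of the sphere with the double coset $\PGL(2,\Z_p)h_p^{\nu_p}\PGL(2,\Z_p)$ followed by adelic reassembly is precisely that analysis. The one detail you supply that the paper glosses over entirely --- factoring $N=p^{\nu_p}u$ with $u\in\Z_p^\times$ to see that the $p$-component of $h_N$ generates the same double coset as $h_p^{\nu_p}$ --- is a genuine and necessary step, and you handle it correctly.
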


\section{Adelic mixing and Equidistribution of Hecke spheres}

The previous section explained a technology to reinterpret the dynamical problems related to Hecke trees. In this section, we explain the strategy to deduce the answer to the first question: equidistribution of Hecke spheres, see subsection \ref{ss:Heckecorrespondance}. This strategy builds over a very deep result, namely \emph{adelic mixing}, that I will present. 
I will try to convince the reader that, once accepted this result, the equidistribution of Hecke spheres is rather "easy". 
Of course, almost everything is hidden in the adelic mixing property. But I hope that the reader more or less used to the usual mixing of the geodesic flow will admit this mixing property without too much difficulties.
And this presentation might convince this reader that ideas coming out from dynamical considerations, once properly reinterpreted, leads to nice and natural arithmetic considerations.

\subsection{Adelic mixing}

Let us recall the setting of subsection \ref{ss:Heckecorrespondance}. We take a point $x=[\Lambda]$ in $X(1)$ and write it as $\PGL(2,\Z)g\mathrm{SO}(2)$. Stated in another way, $\PGL(2,\Z)g$ is a point in the unitary tangent bundle projecting to $x$.
We can lift $x$ one more step upward, that is inside $\PGL(2,\Q)\backslash \PGL(2,\A)$: $x$ is then the projection of the set $$\left( g\mathrm{SO}(2)\times \prod_p \PGL(2,\Z_p)\right)$$ in the double quotient $$X(1)=\PGL(2,\Q)\backslash \PGL(2,\A) /\left(\mathrm{SO}(2)\times \prod_p \PGL(2,\Z_p)\right).$$

Moreover, thanks to lemma \ref{lem:spheres}, the Hecke spheres $h^x(S_N)$ are the projection in this double quotient of 
$$\left( g\mathrm{SO}(2)\times \prod_p \PGL(2,\Z_p)\right)\cdot h_N.$$
In order to prove equidistribution, we will prove that, already at the level of $\PGL(2,\Q)\backslash\PGL(2,\A)$, this sets equidistribute toward the Haar probability measure on $\PGL(2,\Q)\backslash\PGL(2,\A)$. Recall that, up to normalization, a Haar measure on $\PGL(2,\A)$ exists and is unique. Moreover, as $\PGL(2,\Q)$ is a lattice in $\PGL(2,\A)$, it induces a unique probability measure on $\PGL(2,\Q)\backslash\PGL(2,\A)$; we will call it the volume and denote it by $\mathrm{Vol}$. Now what exactly does this equidistribution mean ?

Consider the Haar probability measure $\nu$ on the compact subgroup $$\mathrm{SO}(2)\times \prod_p \PGL(2,\Z_p)$$ of $\PGL(2,\A)$. It projects to a probability measure $\bar\nu$ in $\PGL(2,\Q)\backslash\PGL(2,\A)$. So we are looking at the evolution of $\bar\nu$ under the transformation $\PGL(2,\Q)g\mapsto \PGL(2,\Q)gh_N$ in $\PGL(2,\Q)\backslash \PGL(2,\A)$ and would like to prove it converges to $\textrm{Vol}$.

At this point, it appears clearly that a mixing property for the action of $h_N$ on $\PGL(2,\Q)\backslash \PGL(2,\A)$ with respect to $\mathrm{Vol}$ would help: this property implies that, for any measure $\mu$ absolutely continuous with respect to $\mathrm{Vol}$, the measures $(h_N)^*\mu$ converge to $\mathrm{Vol}$. 
The good news is that this mixing property holds.

Let us define some notations: the space $\PGL(2,\Q)\backslash \PGL(2,\A)$ has a probability measure $\mathrm{Vol}$, so it makes sense to look at the space $\mathrm{L}^2\left(\PGL(2,\Q)\backslash \PGL(2,\A)\right)$ of square integrable functions on it. We denote by $\langle\cdot,\cdot\rangle$ the hermitian scalar product. Moreover, we denote by $\mathrm{L}_0^2\left(\PGL(2,\Q)\backslash \PGL(2,\A)\right)$ the subspace of functions $\phi$ such that $\int \phi d\mathrm{Vol}=0$. The action of the group $\PGL(2,\A)$ on the quotient $\PGL(2,\Q)\backslash\PGL(2,\A)$ leaves the measure $\mathrm{Vol}$ invariant: the latter is the projection of the Haar measure. So this action yields an action on the space of square-integrable functions. This action is defined, with obvious notations, by: 
$$g\cdot \phi \: :\: x\mapsto \phi(x.g).$$
We may then state:
\begin{theorem}\label{thm:mixing}
Let  $\phi$ and $\psi$ be two functions of $\mathrm {L}_0^2\left(\PGL(2,\Q)\backslash \PGL(2,\A)\right)$. We suppose that $\phi$ and $\psi$ are invariant under the action of $\prod_p \PGL(2,\Z_p)$.

Then, as $g\to\infty$ in $\PGL(2,\A)$, we have:
$$\langle \phi ,g\cdot \psi\rangle \to 0.$$
\end{theorem}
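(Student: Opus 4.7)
The plan is to view $\langle \phi, g\cdot\psi\rangle$ as a matrix coefficient of the unitary representation of $\PGL(2,\A)$ on $\mathrm{L}_0^2\bigl(\PGL(2,\Q)\backslash\PGL(2,\A)\bigr)$, and to attack it through the spectral decomposition into irreducible automorphic representations. First I would write
\[
\mathrm{L}_0^2\bigl(\PGL(2,\Q)\backslash\PGL(2,\A)\bigr) = \int^{\oplus} \pi \, d\mu(\pi),
\]
where each $\pi$ is an irreducible automorphic representation admitting a restricted tensor product decomposition $\pi \simeq \pi_\infty \otimes \bigotimes'_p \pi_p$. Since $\phi$ and $\psi$ are fixed by $K^f := \prod_p \PGL(2,\Z_p)$, only those $\pi$ whose every finite-place component $\pi_p$ is unramified (spherical) contribute, and on such a $\pi$ the spectral components of $\phi$ and $\psi$ factor as $\phi_\infty \otimes \bigotimes_p v_p^\circ$, with $v_p^\circ$ the essentially unique spherical vector.

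Using this factorization together with the Cartan decomposition at each finite place, one may replace the finite part of $g$ by double-coset representatives of the form $h_p^{n_p}$, with $h_p = \bigl(\begin{smallmatrix} p & 0 \\ 0 & 1\end{smallmatrix}\bigr)$ and $n_p \geq 0$ (nonzero for only finitely many $p$). The matrix coefficient then splits placewise:
\[
\langle \phi, g\cdot\psi\rangle = \int^{\oplus} \langle \phi_\infty, g_\infty\cdot \psi_\infty\rangle_{\pi_\infty} \prod_p \langle v_p^\circ, h_p^{n_p}\cdot v_p^\circ\rangle_{\pi_p}\, d\mu(\pi).
\]
Each finite-place factor is given by Macdonald's formula in terms of the Satake parameters of $\pi_p$, and the Archimedean factor is controlled by Harish-Chandra's standard estimate for spherical functions on $\PGL(2,\R)$.

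There are then two regimes for $g\to\infty$ in $\PGL(2,\A)$. When $g_\infty$ escapes every compact set of $\PGL(2,\R)$, the Archimedean matrix coefficient goes to zero by Howe-Moore, uniformly in $\pi$ thanks to a spectral gap away from the trivial representation---i.e.\ Selberg's $3/16$ theorem or its modern sharpenings. The ``horizontal'' regime---where $g_\infty$ stays bounded but $N := \prod_p p^{n_p} \to \infty$---is the hard one: each local factor $\langle v_p^\circ, h_p^{n_p} v_p^\circ\rangle$ is strictly less than $1$ in absolute value, but closing the infinite product requires a bound of Ramanujan-Petersson type on the Satake parameters, uniform over all primes and all automorphic $\pi$. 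The Kim-Sarnak-Shahidi bounds for $\mathrm{GL}(2)$ supply exactly this arithmetic input.

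The main obstacle is therefore precisely this horizontal case, which is not a soft dynamical fact but the serious arithmetic content of the adelic mixing theorem: absent Ramanujan-type control, mere pointwise decay of each factor does not force the product to vanish. For this reason I would not attempt to prove Theorem \ref{thm:mixing} from first principles, but rather rely on the references cited in the text, taking care only that the spectral and spherical reductions above are carried out cleanly enough that those cited results apply directly.
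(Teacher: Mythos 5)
The paper does not actually prove Theorem \ref{thm:mixing}: it says explicitly that the result is the statement that the trivial representation is isolated among the automorphic unitary representations of $\PGL(2,\A)$ (``property $\tau$''), credits the final step to Clozel, and refers to Gorodnik--Maucourant--Oh, Clozel--Oh--Ullmo and Sarnak for the proof. Your proposal lands in the same place --- you too delegate the hard input to the cited literature --- so there is no divergence of strategy; what you add, and the paper omits, is an accurate account of the architecture of those proofs: direct-integral decomposition of $\mathrm{L}^2_0$ into irreducible automorphic representations, reduction to everywhere-spherical representations via $\prod_p\PGL(2,\Z_p)$-invariance, factorization of the matrix coefficient over places through the restricted tensor product and the Cartan decomposition, Macdonald's formula at the finite places, and the correct identification of the crux as a bound toward Ramanujan--Petersson that is \emph{uniform} in the prime $p$ and in the representation $\pi$. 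This is consistent with the paper's own remark that the decay of coefficients for $\PGL(2)$ is directly tied to bounds toward the Ramanujan conjecture. If you ever flesh the sketch out, two points deserve care: the direct integral includes the continuous (Eisenstein) spectrum, not just cusp forms, and the needed uniformity is a genuine spectral gap of the form $|\alpha_p|\le p^{1/2-\delta}$ over the whole automorphic dual --- as you rightly stress, pointwise decay of each local factor, or Howe--Moore applied representation by representation, does not suffice. In short: no gap relative to the paper, since the paper itself offers no proof, and your reduction is the standard and correct one.
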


This theorem deserves a lot of comments. 

First of all, it is indeed a mixing property: if $A$ and $B$ are two open sets of $\PGL(2,\Q)\backslash\PGL(2,\A)$, denoting by $\phi=\chi_A-\mathrm{Vol}(A)$ and $\psi=\chi_B-\mathrm{Vol}(B)$ their normalized characteristic functions , we get the usual mixing property:
$$\mathrm{Vol}(A\cap B g^{-1})\xrightarrow{g\to\infty}\mathrm{Vol}(A)\mathrm{Vol}(B).$$

Second, it is valid in a very wide generality. It is more or less true for any semisimple algebraic group. We refer the reader to Gorodnik-Maucourant-Oh \cite{gorodnik-maucourant-oh} for a general result.

It is proven by the study of certain unitary representations of $\PGL(2,\A)$. In this setting, it states that the trivial representation is isolated among automorphic ones. This is called "property $\tau$" (in reference to the stronger property $T$). The final step of this was done by Clozel \cite{clozel}, after many works. I will not go into these topics, far too involved for my purpose.

One last comment: the speed of convergence (so-called "decay of coefficients") is known. In the case of $\PGL(2)$ it is deeply and directly related to bounds towards the "Ramanujan conjecture".

The reader will find more explanations in Gorodnik-Maucourant-Oh \cite{gorodnik-maucourant-oh}, Clozel-Oh-Ullmo \cite{clozel-oh-ullmo}, Sarnak \cite{sarnak} and Goldstein-Mayer \cite{goldstein-mayer}.

\subsection{How to prove equidistribution of Hecke spheres ?}

As previously analyzed, we want to prove the convergence of the measure $h_N^*\nu$. First of all, note that the sequence $h_N$ of elements of $\PGL(2,\A)$ goes to $\infty$ as $N\to \infty$. So they fit in the setting of theorem \ref{thm:mixing}. However, the measure $\bar\nu$ is too singular: it is supported on a set of volume $0$. It is a classical trick to first approximate $\bar\nu$ by a measure which has a density with respect to $\mathrm{Vol}$. And then hope the approximation will not be too difficult to track when letting the dynamics evolve.

Here we are in a very good situation. Recall that the support of $\bar\nu$ is the set:
$$\PGL(2,\Q)g\left(\mathrm{SO}(2)\times \prod_p \PGL(2,\Z_p)\right).$$
Now, the real component of $h_N$ is $\mathrm{Id}$. So one can "fatten up" $\mathrm{SO}(2)$, by taking a small neighborhood $\Omega$ of $\mathrm{Id}$ in $\PGL(2,\R)$; and replace $\nu$ by the volume restricted to $g\left(\mathrm{SO}(2)\Omega\times \prod_p \PGL(2,\Z_p)\right)$, normalized to be a probability measure. Denote by $\nu_\Omega$ this probability measure. As $\Omega$ shrinks to $\mathrm{SO}(2)$, $\nu_\Omega$ converges to $\nu$. But, note that the action of $\Omega$ commutes to the action of $h_N$:
$$\PGL(2,\Q)g\left(\mathrm{SO}(2)\Omega\times \prod_p \PGL(2,\Z_p)\right)h_N\hspace{3cm}$$
$$\hspace{3cm}=\PGL(2,\Q)g\left(\mathrm{SO}(2)\times \prod_p \PGL(2,\Z_p)\right)h_N\Omega.$$
So the fattening is inert under the dynamic: $h_N^*\bar\nu_\Omega$ converges uniformly to $h_N^*\bar\nu$.

So we only need to prove the convergence of $h_N^*\bar \nu_\Omega$ towards $\mathrm{Vol}$, for any $\Omega$.
 This is a direct consequence of theorem \ref{thm:mixing}, as $\bar \nu_\Omega$ has a (bounded) density with respect to $\mathrm{Vol}$:
  let $\psi$ be this density. 
  For any function $\phi$ continuous with compact support on $\PGL(2,\Q)\backslash\PGL(2,\A)$, let $\phi'=\phi-\int\phi d\mathrm{Vol}$ and $\psi'=\psi-1$. Then we have:
\begin{eqnarray*}
  \int \phi d(h_N^*\bar\nu_\Omega)&=&\int \phi' d(h_N^*\bar\nu_\Omega) +\int \phi d\mathrm{Vol}\\
  &=&\int \phi' (h_N \cdot\psi)d\mathrm{Vol}+\int \phi d\mathrm{Vol}\\
  &=&\langle \phi',h_N\cdot \psi\rangle+\int \phi d\mathrm{Vol}\\
  &=&\langle \phi',h_N\cdot \psi'\rangle+\int \phi d\mathrm{Vol}\\
  &\xrightarrow{N\to\infty}& \int \phi d\mathrm{Vol}
\end{eqnarray*}  
From the third line to the fourth, you just use that $\langle \phi',1\rangle=\int \phi' d\mathrm{Vol}=0$. And the convergence is given by the mixing property, i.e. theorem \ref{thm:mixing}.

This proves theorem \ref{th:equi1} on the equidistribution of Hecke spheres.


\bigskip

This text only touch on the topic of adelic mixing and adelic dynamical systems. For instance, I completely ignored any property linked to entropy of these dynamic; the reader may refer to the beautiful papers by Einsiedler-Lindenstrauss-Michel-Venkatesh \cite{ELMV} (where Hecke trees and their adelic interpretation are a central object), Lindenstrauss \cite{lindenstrauss}, Einsiedler-Katok-Lindenstrauss \cite{EKL}.

\bibliographystyle{alpha}
\bibliography{biblio}
\end{document}